\newcommand\indicator{{\mathbb{I}}}
\newcommand\E{{\mathbb E}}
\newcommand\convD{{\buildrel {\cal D} \over \longrightarrow}}
\newcommand\normal{{\cal N}}
\theoremstyle{thmstyleone}%
\newtheorem{theorem}{Theorem}
\newtheorem{proposition}[theorem]{Proposition}%
\theoremstyle{thmstyletwo}%
\newtheorem{example}{Example}%
\newtheorem{remark}{Remark}%
\newtheorem{corollary}{Corollary}[section]
\theoremstyle{thmstylethree}%
\newtheorem{definition}{Definition}%
\begin{document}

\title[Article Title]{Applying affine urn models to the global profile of hyperrecursive trees}


\author*[1]{\fnm{Joshua} \sur{Sparks}}\email{ josparks@gwu.edu}

\affil*[1]{\orgdiv{Department of Statistics}, \orgname{The George Washington University}, \orgaddress{\street{} \city{Washington}, \postcode{20052}, \state{DC}, \country{USA}}}


\abstract{Inside the discipline of graph theory exists an extension known as the hypergraph. This generalization of graphs includes vertices along with hyperedges consisting of collections of two or more vertices. One well-studied application of this structure is that of the recursive tree, and we apply its framework within the context of hypergraphs to form hyperrecursive trees, an area that shows promise in network theory. However, when the global profile of these hyperrecursive trees is studied via recursive equations, its recursive nature develops a combinatorial explosion of sorts when deriving mixed moments for higher containment levels. One route to circumvent these issues is through using a special class of urn model, known as an affine urn model, which samples multiple balls at once while maintaining a structure such that the replacement criteria is based on a linear combination of the balls sampled within a draw.

We investigate the hyperrecursive tree through its global containment profile, observing the number of vertices found within a particular containment level, and given a set of $k$ containment levels, relate its structure to that of a similar affine urn model in order to derive the asymptotic evolution of its first two mixed moments. We then establish a multivariate central limit theorem for the number of vertices for the first $k$ containment levels. We produce simulations which support our theoretical findings and suggest a relatively quick rate of convergence.
}

\keywords{ hypergraph, urn model, martingale, limit law, simulation}


\pacs[MSC Classification]{05082, 90B15, 60F05}

\maketitle

\section{Introduction to the Hyperrecursive Tree}\label{sec1}

A graph consists of a set of vertices and a set of edges which either connect two vertices together or one vertex to itself. From this construction, we generalize the definition of a graph so that more than two vertices can be joined together by a given edge. Such a generalization is called a \emph{hypergraph}, where we have hyperedges consisting of collections of one or more vertices.

Many graph classes are worthy candidates to extend into the realm of hypergraphs; one such class of importance is the \emph{recursive tree}. The recursive tree is a tree on $n$ vertices, labeled from 1 to $n$, that is rooted at vertex 1, and such that each unique path from the root downward forms an increasing sequence of vertices to which it crosses. This class has been studied extensively, see \cite{Drmota, QQ, Karonski, Hofri, Smythe} among others. From this construction, we derive a discrete structure that lends itself to the study of hypergraphs: the hyperrecursive tree.  

We define a hyperrecursive tree with parameter (hyperedge size) $\theta$ to be the corresponding hypergraph to the recursive tree, to which we obtain the following construction: We begin with $\theta$ originating vertices, 
all labeled with 0 and contained within the first hyperedge. At each subsequent step, a vertex is added to the structure and labeled by the number corresponding to its step of inclusion. The incoming vertex then chooses $(\theta-1)$ of the existing vertices uniformly at random to co-share a new hyperedge, with all subsets of vertices of size $(\theta-1)$ being equally likely. From this construction, we see that the usual recursive tree follows when $\theta = 2$.
In Figure~\ref{Fig:hyper}, we witness the growth of a hyperrecursive tree at time $n=2$, when $\theta = 3$. Here, the hyperedge
appearing at step~$i$ is labeled $e_i$.
\begin{remark}
When $\theta \ge 3$, the hyperrecursive tree is no longer a tree in the mathematical sense.
We call such a hypergraph by this name only to preserve its historic origin of these structures and how it reduces to a recursive trees when $\theta=2$.
\end{remark}
\bigskip
\begin{figure}[thb]
\begin{center}
\begin{tikzpicture}[scale=0.6]
\node[draw=white] at (-8, 13) {$n=0$};
\node[draw=white] at (0, 13) {$n=1$};
\node[draw=white] at (8, 13) {$n=2$};

\node[draw=white] at (-10.9, 10) {$e_0$};
\node[draw=white] at (-2.9, 10) {$e_0$};
\node[draw=white] at (5.1, 10) {$e_0$};
\node[draw=white] at (7.25, 6.5) {$e_1$};
\node[draw=white] at (-0.75, 6.5) {$e_1$};
\node[draw=white] at (10.8, 8.1) {$e_2$};

\node[draw=white] at (-8, 10) {$0$};
\node[draw=white] at (-9.5, 10) {$0$};
\node[draw=white] at (-6.5, 10) {$0$};

\draw [ultra thick] (-8,10) ellipse (2.5 and 0.8);
\draw [thick] (-9.5,10) circle [radius=0.4];
\draw [thick] (-8,10) circle [radius=0.4];
\draw [thick] (-6.5,10) circle [radius=0.4];

\node[draw=white] at (0, 10) {$0$};
\node[draw=white] at (-1.5, 10) {$0$};
\node[draw=white] at (1.5, 10) {$0$};
\node[draw=white] at (-0.25, 8) {$1$};
\draw [ultra thick] (0,10) ellipse (2.5 and 0.8);
\draw [ultra thick][rotate=-30] (-4.25,8.25) ellipse (1.3 and 2.5);
\draw [thick] (-1.5,10) circle [radius=0.4];
\draw [thick] (0,10) circle [radius=0.4];
\draw [thick] (1.5,10) circle [radius=0.4];
\draw [thick] (-0.25,8) circle [radius=0.4];

\node[draw=white] at (8, 10) {$0$};
\node[draw=white] at (6.5, 10) {$0$};
\node[draw=white] at (9.5, 10) {$0$};
\node[draw=white] at (7.25, 8) {$1$};
\node[draw=white] at (9.25,7.5) {$2$};
\draw [ultra thick] (8,10) ellipse (2.5 and 0.8);
\draw [ultra thick][rotate=-30] (2.5,12.25) ellipse (1.3 and 2.5);
\draw [thick] (6.5,10) circle [radius=0.4];
\draw [thick] (8,10) circle [radius=0.4];
\draw [thick] (9.5,10) circle [radius=0.4];
\draw [thick] (7.25,8) circle [radius=0.4];
\draw [thick] (9.25,7.5) circle [radius=0.4];
\draw[ultra thick] (8.3, 9.3) .. controls (9.4, 12.2)
and (11.5, 12.2) .. (9.797, 6.18);

\draw[ultra thick] (8.32, 9.3) .. controls (5.3, 8.4)
and (5.3, 8) .. (9.838, 6.2);

\end{tikzpicture}
\end{center}
  \caption{A hyperrecursive tree grown in two steps ($\theta =3$).}
  \label{Fig:hyper}
\end{figure}
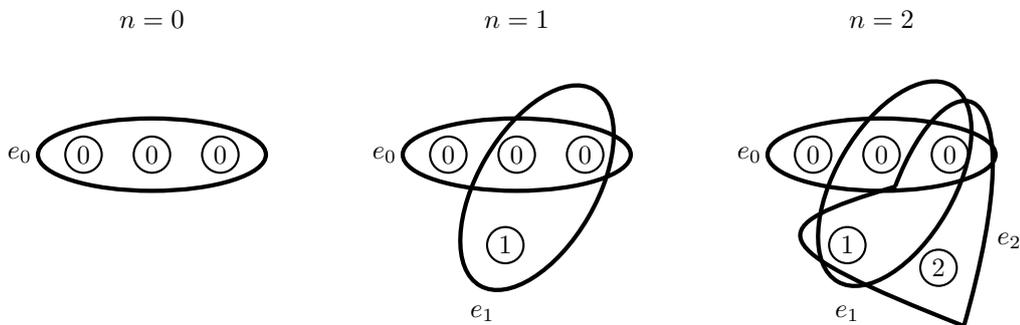
Many interesting properties can be studied from these random structures, two of which include the \emph{local} and \emph{global profiles} of the hyperrecursive tree as studied in \cite{Sparks2}. There, the local profile describes the \emph{level of containment} for a vertex, which evolves at each step and produces asymptotic results which undergo phase changes over time. The global profile (Section \ref{Sec:global}) on the other hand characterizes the number of vertices at a specified containment level.

\section*{Scope}
In \cite{Sparks2}, the global profile of the hyperrecursive tree is explored through the use of recursive equations to generate the means and covariance matrix for the first two containment levels. A central limit theorem for the first containment level is also determined. However, attempts to express means and covariances for higher levels of containment at even the asymptotic level is computationally extensive through this process, and while adaptations of \cite{MahmoudSmythe012} can solve for convergence in distribution for the first three containment levels, this path is also computationally laborious.

One way to circumvent this difficulty is through the use of urn models, specifically through \emph{affine} urn models, which are urn structures with $s\ge1$ drawings with replacement criterion that can be determined by the subset of samples where all balls obtained are of identical color. Often, urns with multi-set drawings become difficult to analyze as we cannot employ classical approaches such as moment methods, analytic combinatorial procedures, or eigenvalue decomposition techniques; however, affine urn models from tenable balanced irreducible schema, as explored in \cite{KubaX, KubaM, Mah2013, Sparks23}, bypass this hurdle through analyzing its \emph{core} matrix $\mathbf{A}$, described more in Section 4.

The rest of the paper proceeds as follows. Section 3 covers some notation and foundational terminology. Section 4 briefly describes affine urn models as well as provides asymptotic results outlined in \cite{Sparks23}. Section 5 then examines the global profile through its relationship to affine urns in order to produce asymptotic results and convergence in distribution. We conclude with simulation results for the first three containment levels in Section 6.

\section{Notation}
\label{Sec:notation}
We begin with some foundational notation. Much of our asymptotic analysis relies on the use of ``Big-Oh'' $(\cal O)$ and ``Small-Oh'' $(o)$ notation. We may use this system via the standard asymptotic notation $o$, $\mathcal O$, but will also need them in a matricial sense, which will be indicated in boldface, so $\mathbf{o}(n)$ and $ \bm{\mathcal O}(n)$ indicate a matrix
of appropriate dimensions with all its entries being $o(n)$ or~$\mathcal{O}(n)$, respectively.

We often use both rising and falling factorials within the context of this research. For rising factorials, we use Pochhammer's symbol, $\langle x \rangle_m$, as the $m$-times rising factorial of a real number $x$, with $\langle x \rangle_m = x(x+1) \ldots (x+m-1)$, and defining $\langle x \rangle_0:=1$. For falling factorials, we use the notation $(x)_{m}=x(x-1)\dots(x-(m-1))$, where $(x)_0:=1$. We define $[k]:=\{1, \dots, k\}$ to be the set of the first $k$ natural numbers.

Many averages and variances contain generalized harmonic numbers. The generalized harmonic number is defined as
$H_n^{(s)} (x)$ $= \sum_{k=1}^n (k+x)^{-s}$, for $n\in \mathbb{Z}_{\ge 0}$ and $s,x\ge 0$. 
The superscript is often dropped when $s=1$, and for any fixed $x$, we have the classic asymptotic approximations:
\begin{equation}
H_n (x) = H_n^{(1)}(x) = \ln (n) - \psi(x) - \frac{1}{x}+\mathcal{O}\Bigl(\frac{1}{n}\Bigr),
\label{Eq:harmonic}
\end{equation}
\begin{equation} 
H_n^{(2)}(x) = \frac{d}{dx}\psi(x+1) + \mathcal{O}\Big(\frac{1}{n}\Big),
\label{Eq:harmonic2}
\end{equation}
where $\psi(.)$ is the digamma function. Note that as $x\to 0$, $- \psi(x) - x^{-1}$ converges to the Euler-Mascheroni constant $\gamma$, defined in \cite{Euler}.

In our asymptotic analysis, we apply the Stirling approximation of the ratio of growing Gamma functions, as detailed in \cite{Tricomi}. Namely, for fixed~$\alpha,\beta \in \mathbb R$,
we have
\begin{equation}
\frac{\Gamma(x+\alpha)} {\Gamma(x+\beta)} \sim x^{\alpha-\beta} \Bigl(1 +\frac{(\alpha-\beta)(\alpha+\beta-1)}{2x}+ \mathcal{O}\Bigl(\frac{1}{x^{2}} \Bigr)\Bigr), \qquad \mbox {as \ } x \to\infty.
\label{Eq:Stirlingapprox}
\end{equation}
This approximation is applicable, even if $\alpha = \alpha(x)$ and $\beta =\beta(x)$ 
grow slowly with $x$. 

We denote a diagonal matrix with the provided entries $a_1, a_2, \dots, a_k,$ as 
\begin{align*}
\mathbf{diag}(a_1, a_2, \dots, a_k) =\small
\begin{pmatrix}
			a_1 & 0&\ldots&0 \cr
                               	0 &a_2&\ldots&0 \cr
			\vdots& \vdots&\ddots&\vdots \cr
                               	 0& 0&\ldots&a_k\cr
\end{pmatrix}.
\label{eq:diag}
\end{align*}

For a square matrix $\mathbf{A}$, the ordering of our eigenvalues is important for our analysis, especially the first two values $\lambda_1$ and $\lambda_2$. In general, we order them such that $\Re{(\lambda_1)}\ge\Re{(\lambda_2)}\ge\dots\ge\Re{(\lambda_k)}$; see \cite{Sparks23} for further guidelines.

Additionally, we may write $\mathbf{A}$ as a {Jordan Decomposition} of matrices in the form 
\begin{equation} 
\label{EQ:JD}
\mathbf{A} =\mathbf{T \mathbf{J} T} ^{-1}, 
\end{equation}
where $\mathbf{T}$ is a square, invertible {transition} matrix and 
\begin{align*}
\mathbf{J}=\small\begin{pmatrix}\displaystyle 
  \mathbf{J}_{\lambda_1} &  \mathbf{0}_{1,2}&\cdots& \mathbf{0}_{1, r-1}&  \mathbf{0}_{1,r}\medskip\\
   \mathbf{0}_{2,1} &\mathbf{J}_{\lambda_2}& \cdots&\mathbf{0}_{2,r-1}&\mathbf{0}_{2,r}\medskip \\
   \vdots &\vdots &\ddots& \vdots &\vdots \medskip \\
   \mathbf{0}_{r-1,1} &  \mathbf{0}_{r-1,2} & \cdots& \mathbf{J}_{\lambda_{r-1}}  &\mathbf{0}_{r-1,r}\medskip \\
   \mathbf{0}_{r,1} & \mathbf{0}_{r,2} &\cdots&\mathbf{0}_{r,r-1}& \mathbf{J}_{\lambda_r}
   \end{pmatrix},
\end{align*}
such that $\{\lambda_\ell\}_{\ell=1}^{r}$ are eigenvalues of $\mathbf{A}$, and each $\mathbf{J}_{\lambda_\ell}$ is a Jordan block of the form
\begin{align*}
\mathbf{J}_{\lambda_\ell}=\small\begin{pmatrix}\displaystyle 
  \lambda_\ell &  1&0&\cdots& 0& 0\\
   0 &\lambda_\ell& 1&\cdots&0&0 \\
   \vdots &\vdots &&\ddots& \vdots &\vdots  \\
   0&0&  0 & \cdots&\lambda_\ell  &1\\
  0&0&0 &\cdots&0&\lambda_\ell
   \end{pmatrix}.
\end{align*}
Such discussion of application of Jordan decomposition can be found in textbook sources such as \cite{Horn}.

With respect to urn models, we let $\mathbf{X}_n$ be our random (row) vector of balls inside of our urn, with counts $X_{n,1}, X_{n,2}, \ldots,X_{n,k}$ after draw $n$. We let $\tau_n$ be the total number of balls in our urn, and so $\tau_n = \sum_{i=1}^{k} X_{n,i}$. We sample $s\ge 1$ balls in a single sample, and will do so here without replacement. We denote the (row) vector of balls sampled (such that $s_i$ balls are sampled of color $i$) by
\begin{align*}
\mathbf{s}=(s_1, s_2, \ldots ,s_k).
\end{align*}

We also wish for our urns to possess the desirable properties of {tenability}, {irreducible}, and to be (${b}$-){balanced} with respect to the replacement criterion. Tenability occurs when our urn scheme can endure indefinite samples without violation of the replacement criterion. Irreducibility can be conceptualized as the ability for all possible colors to be realized regardless of the initial configuration of balls in the urn. An urn is balanced when regardless of the sample drawn, the same total number of balls are added (or removed) from the urn at each stage; for ${b}$-balanced urns, note that $\tau_n = {b} n + \tau_0$.

With this definition of the hyperrecurive tree, we let $\tau_n^{(\theta)}$ be the cardinality
 of the vertex set ({\em size}) of the hyperrecursive tree at time~$n$
(right after the insertion of vertex~$n$).
We sometimes refer to $n$ as the {\em age} of the hyperrecursive tree, and so the total may be expressed as
\begin{equation}
\tau_n^{(\theta)} =  n + \theta.
\label{Eq:total}
\end{equation}

For this model, our sample is taken at time $n$ and drawn without replacement, classified as Model $\cal M$, as denoted by \cite{Chen1, Chen2}. Thus, the vector for the sampled
number of vertices with containment levels $1, \ldots, k$ has a multivariate hypergeometric distribution when conditioned at time $(n-1)$. Further discussion on the multivariate hypergeometric distribution can be found in works such as \cite{Kendall}.

\section{Affine Urn Models}
\label{Affine}
The field of urn models have a rich history in various disciplines as a way to analyze occurrences such as contagion, random walks, fluid diffusion, occupancy problems, and coupon collection; see \cite{Mah2009}. Through its foundational work with two or more ball colors, \cite{Athreya, Bagchi, Flajolet, Smythe2, Janson} and others have developed ways to examine the evolutionary state of an urn via draws of a single ball at each stage. However, when more than a single ball is drawn at a time, many of the traditional techniques develop snags created by larger samples. One class of urn model problems that work around these hurdles are those which classify themselves as \emph{affine}.

The concept of the affine urn {draws} from the construction of the replacement matrix, denoted by $\mathbf{M}$, and from what we call its \emph{core matrix}, $\mathbf{A}$. While~$\mathbf{M}$ may be rectangular (due to multiple drawings), the core matrix $\mathbf{A}$ is a square, $k \times k$ matrix that describes the replacement criterion when all $s$ balls sampled are of the same color. In \cite{KubaX}, each of the $k$ rows in this matrix represent $s$ draws from color $i\in[k]$, and so if we let $\mathbf{e}_i$ be a $1\times k$ row vector with 1 in the $i^{\textnormal{th}}$ position and zeros elsewhere, we may denote the (row) vector of balls sampled (such that $s_i$ balls are sampled of color $i$) by $\mathbf{s}= \sum_{i=1}^{k} s_i \, \mathbf{e}_i=(s_1, s_2, \dots ,s_k)$. Note that tenability follows if the replacement row vector $\bf{a}_{\bf{s}}$ consists of entries $(a_1, a_2,\dots,a_k)$ where $a_i \ge -s_i, 1\le i \le k$. 

The techniques behind our methodology are approached using row vectors, including the random vector $\mathbf{X}_n$. For a draw of $m$ balls, we define the discrete simplex $\Delta$ to be the collection of all possible samples~$\mathbf{s}$, or more precisely,
\begin{align*}
\Delta=\Big\{ \mathbf{s} \,\Big|\,  s_i \ge 0, i \in [k],  \sum_{j=1}^{k} s_j = s \Big\}.
\end{align*}
We set $\mathbf{a}_{\mathbf{s}}=(a_1, a_2, \dots ,a_k)_{\mathbf{s}}$ to be the row vector of ball replacement in our urn for the sample $\mathbf{s}$. To preserve tenability, we ensure that $a_i \ge -s_i$ for all $i\in[k]$ when sampling without replacement. Thus, we treat our replacement matrix $\mathbf{M}$ to be of the form $\mathbf{M}=(\mathbf{a}_{\mathbf{s}})_{\mathbf{s}\in \Delta}$ with \emph{core matrix} $\mathbf{A}=(\mathbf{a}_{s \, \mathbf{e}_i})_{1\le i \le k}$. An example of this construction is displayed in \cite{Sparks23}.

We define $\mathbf{Q}_n:=\mathbf{Q}_{n, \Delta}$ to be the random vector which records the sample \\
$\mathbf{s}=(s_1, s_2, \dots, s_k)$ at draw $n$ from  $\mathbf{X}_{n-1}$, a vector of $k$ colors
$\{X_{n-1, i}\}_{i=1}^{k}$. In an unordered sample without replacement (Model $\mathcal{M}$), this sampling follows a multivariate hypergeometric distribution with probability
\begin{align*} 
\mathbb{P}\big(\mathbf{Q}_{n, \Delta}=\mathbf{s}\,\big|\,\mathbf{X}_{n-1} \bigr)=&\, \begin{pmatrix} s \cr s_1, s_2, \dots, s_r \end{pmatrix} \frac{ (X_{n-1, 1})_{s_1} \cdots (X_{n-1,k})_{s_k}}{(\tau_{n-1})_{s}}.
\end{align*}

With these urn structures, the outcome of the $n^{\textnormal{th}}$ draw derives upon knowledge of the previous draw. Thus, by conditioning on the previous draw, we formulate a \emph{conditional distributional equation} for the random row vector $\mathbf{X}_n$. Conceptually, note that $\mathbf{X}_n$ is the combination of both the count from $\mathbf{X}_{n-1}$, as well as the contribution of the $n^{\textnormal{th}}$ draw. Thus, for a sample of $\mathbf{s}^*$ at draw $n$, we produce the corresponding formula as
\begin{align*}
\mathbf{X}_n= \mathbf{X}_{n-1} + \sum_{\mathbf{s} \in \Delta} \indicator\{\mathbf{Q}_{n, \Delta}=\mathbf{s}^*\} \,\mathbf{a}_{\mathbf{s}}.
\end{align*}

Furthermore, \cite{KubaX} generalizes the concept of dichromatic linear urn models with multiple drawings to two or more colors and derives recursion properties for the model, as given below.
\begin{definition}
A $k$-color urn model with multiple drawings and sample size \\$s\ge 1$ is called linear if the random vector $\mathbf{X}_n$, of ball counts after $n$ draws, satisfies $\E \big[\mathbf{X}_n \,| \,\mathbb{F}_{n-1} \big] = \mathbf{X}_{n-1}\, \mathbf{C}_n$, for a sequence of $k \times k$ real-valued matrices $\{\mathbf{C}_i \}_{i \in \mathbb{N}}$.
\end{definition}

\begin{theorem}
\label{thm:Linearity}
A balanced $k$-color urn model with multiple drawings consisting of unordered samples of size $s$ is linear if and only if $\mathbf{a}_{\mathbf{s}} = \sum_{i=1}^{k} \frac{s_i}{s}\mathbf{a}_{s\mathbf{e}_i}$ for all 
$\mathbf{s}\in \Delta$. Furthermore, the conditional expected values is provided by 
\begin{equation}
\label{EQ: EXQA}
\E \big[\mathbf{X}_n \,\big|\, \mathbb{F}_{n-1} \big]= \mathbf{X}_{n-1} \Bigl(\mathbf{I}_k + \frac{1}{\tau_{n-1}}\mathbf{A}\Bigr).
\end{equation}
\end{theorem}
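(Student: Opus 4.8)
The plan is to read off $\E[\mathbf{X}_n\mid\mathbb{F}_{n-1}]$ directly from the conditional distributional equation $\mathbf{X}_n = \mathbf{X}_{n-1} + \sum_{\mathbf{s}\in\Delta}\indicator\{\mathbf{Q}_{n,\Delta}=\mathbf{s}\}\,\mathbf{a}_{\mathbf{s}}$, taking expectations term by term so that everything is expressed through the conditional law of the sample $\mathbf{Q}_{n,\Delta}$ given $\mathbf{X}_{n-1}$. Since that law is multivariate hypergeometric, the only probabilistic ingredient needed is its marginal mean: the expected number of color-$i$ balls in an unordered sample of size $s$ drawn without replacement from $\mathbf{X}_{n-1}$ equals $s\,X_{n-1,i}/\tau_{n-1}$. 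Equivalently, this is the Chu--Vandermonde identity $\sum_{\mathbf{s}\in\Delta}\binom{s}{s_1,\ldots,s_k}s_i\,(X_{n-1,1})_{s_1}\cdots(X_{n-1,k})_{s_k} = s\,X_{n-1,i}\,(\tau_{n-1}-1)_{s-1}$, together with $(\tau_{n-1})_s = \tau_{n-1}(\tau_{n-1}-1)_{s-1}$.

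For the ``if'' direction and the displayed formula, I would substitute the hypothesis $\mathbf{a}_{\mathbf{s}} = \sum_{i=1}^{k}\tfrac{s_i}{s}\mathbf{a}_{s\mathbf{e}_i}$ into $\E[\mathbf{a}_{\mathbf{Q}_{n,\Delta}}\mid\mathbb{F}_{n-1}] = \sum_{\mathbf{s}\in\Delta}\mathbb{P}(\mathbf{Q}_{n,\Delta}=\mathbf{s}\mid\mathbf{X}_{n-1})\,\mathbf{a}_{\mathbf{s}}$ and interchange the two sums; the marginal mean above collapses the inner sum, giving $\E[\mathbf{a}_{\mathbf{Q}_{n,\Delta}}\mid\mathbb{F}_{n-1}] = \sum_{i=1}^{k}\tfrac{X_{n-1,i}}{\tau_{n-1}}\,\mathbf{a}_{s\mathbf{e}_i} = \tfrac{1}{\tau_{n-1}}\mathbf{X}_{n-1}\mathbf{A}$, where the last equality is just the row-wise definition $\mathbf{A}=(\mathbf{a}_{s\mathbf{e}_i})_{1\le i\le k}$. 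Adding $\mathbf{X}_{n-1}$ yields $\E[\mathbf{X}_n\mid\mathbb{F}_{n-1}] = \mathbf{X}_{n-1}\bigl(\mathbf{I}_k + \tfrac{1}{\tau_{n-1}}\mathbf{A}\bigr)$, which has the required form with $\mathbf{C}_n = \mathbf{I}_k + \tau_{n-1}^{-1}\mathbf{A}$; this is a legitimate deterministic matrix precisely because the urn is balanced, so $\tau_{n-1}$ is non-random.

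For the ``only if'' direction (the case $s=1$ being vacuous), set $\mathbf{b}_{\mathbf{s}} := \mathbf{a}_{\mathbf{s}} - \sum_{i=1}^{k}\tfrac{s_i}{s}\mathbf{a}_{s\mathbf{e}_i}$; the same computation gives $\E[\mathbf{X}_n\mid\mathbb{F}_{n-1}] = \mathbf{X}_{n-1}\bigl(\mathbf{I}_k+\tfrac{1}{\tau_{n-1}}\mathbf{A}\bigr) + \mathbf{R}(\mathbf{X}_{n-1})$, where $\mathbf{R}(\mathbf{x}) := (\tau_{n-1})_s^{-1}\sum_{\mathbf{s}\in\Delta}\binom{s}{s_1,\ldots,s_k}(x_1)_{s_1}\cdots(x_k)_{s_k}\,\mathbf{b}_{\mathbf{s}}$ is a polynomial in $\mathbf{x}$ of total degree at most $s$ whose degree-$s$ homogeneous part is $(\tau_{n-1})_s^{-1}\sum_{\mathbf{s}\in\Delta}\binom{s}{s_1,\ldots,s_k}x_1^{s_1}\cdots x_k^{s_k}\,\mathbf{b}_{\mathbf{s}}$. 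Linearity forces $\mathbf{R}(\mathbf{X}_{n-1}) = \mathbf{X}_{n-1}\bigl(\mathbf{C}_n - \mathbf{I}_k - \tau_{n-1}^{-1}\mathbf{A}\bigr)$ on every reachable configuration; since for $n$ large, tenability and irreducibility make available enough integer configurations on the hyperplane $\{\sum_i x_i = \tau_{n-1}\}$ to determine a polynomial of degree $\le s$, this upgrades to a polynomial identity on that hyperplane, so comparing degree-$s$ homogeneous parts (and using $s\ge 2$) shows $\sum_{\mathbf{s}\in\Delta}\binom{s}{s_1,\ldots,s_k}x_1^{s_1}\cdots x_k^{s_k}\,\mathbf{b}_{\mathbf{s}}$ vanishes on the hyperplane. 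A nonzero homogeneous polynomial cannot vanish on an affine hyperplane missing the origin, and the monomials $x_1^{s_1}\cdots x_k^{s_k}$ for $\mathbf{s}\in\Delta$ are linearly independent, so $\mathbf{b}_{\mathbf{s}}=\mathbf{0}$ for every $\mathbf{s}\in\Delta$, which is exactly the asserted relation.

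I expect the ``only if'' direction to be the only delicate point. The ``if'' half and the explicit formula are essentially a one-line consequence of the hypergeometric marginal mean, whereas the converse requires the bookkeeping that turns ``the conditional mean is linear on the set of reachable configurations'' into a genuine polynomial identity on the hyperplane --- i.e.\ checking that irreducibility and tenability really do supply enough distinct reachable states --- followed by the clean extraction of the multinomial coefficients $\mathbf{b}_{\mathbf{s}}$; this is the step I would carry out most carefully, following the corresponding argument of \cite{KubaX}.
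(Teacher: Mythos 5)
Your ``if'' direction and the derivation of (\ref{EQ: EXQA}) are correct, and they follow exactly the route the paper itself indicates around (\ref{EQ:XQA}): condition on $\mathbb{F}_{n-1}$, use the multivariate hypergeometric marginal mean $\E[\mathbf{Q}_n\,|\,\mathbb{F}_{n-1}]=s\,\mathbf{X}_{n-1}/\tau_{n-1}$, and use balance to make $\tau_{n-1}$ (hence $\mathbf{C}_n=\mathbf{I}_k+\tau_{n-1}^{-1}\mathbf{A}$) deterministic. Note that the paper does not actually prove Theorem~\ref{thm:Linearity}; it imports it from \cite{KubaX}, so the only part of your write-up that carries real weight is the converse, and you are right that the converse needs some richness of the set of reachable compositions (a tenability/irreducibility-type hypothesis that the bare statement suppresses).

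In that converse there is a genuine gap: the step ``this upgrades to a polynomial identity on the hyperplane, so comparing degree-$s$ homogeneous parts shows $\sum_{\mathbf{s}\in\Delta}\binom{s}{s_1,\ldots,s_k}x_1^{s_1}\cdots x_k^{s_k}\,\mathbf{b}_{\mathbf{s}}$ vanishes on the hyperplane'' is not a valid inference. An identity valid only on the affine hyperplane $\{\sum_i x_i=\tau\}$ says the difference of the two polynomials is divisible by $\sum_i x_i-\tau$; it does not let you equate homogeneous parts. For instance $x_1(x_1+x_2-\tau)$ vanishes identically on the hyperplane, yet its degree-$2$ part $x_1^2+x_1x_2$ restricts there to $\tau x_1\not\equiv 0$. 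All your argument actually yields is that the top homogeneous part of $\mathbf{R}$ is divisible by $\sum_i x_i$, and divisibility does not force $\mathbf{b}_{\mathbf{s}}=\mathbf{0}$ even with $\mathbf{b}_{s\mathbf{e}_i}=\mathbf{0}$: for $k=2$, $s=3$, taking $\mathbf{b}_{(2,1)}=\mathbf{b}_{(1,2)}\neq\mathbf{0}$ gives top part proportional to $x_1^2x_2+x_1x_2^2=x_1x_2(x_1+x_2)$, which is divisible by $x_1+x_2$. So the lower-degree terms of the falling factorials cannot be discarded. A correct finish keeps the falling-factorial products themselves: for $\tau$ large the $\binom{s+k-1}{k-1}$ functions $\prod_i(x_i)_{s_i}$, $\mathbf{s}\in\Delta$, restricted to the hyperplane are linearly independent, hence a basis of the polynomial functions of degree at most $s$ there (the dimensions agree), and the Vandermonde identities $\sum_{\mathbf{s}\in\Delta}\binom{s}{s_1,\ldots,s_k}s_i\prod_j(x_j)_{s_j}=s\,x_i\,(\tau-1)_{s-1}$ give the expansion of each coordinate function $x_i$ in this basis; uniqueness of the expansion of the linear function $\mathbf{x}\mathbf{D}_n$, read off at the pure samples $\mathbf{s}=s\mathbf{e}_i$ where $\mathbf{b}_{s\mathbf{e}_i}=\mathbf{0}$, forces $\mathbf{D}_n=\mathbf{0}$ and then $\mathbf{b}_{\mathbf{s}}=\mathbf{0}$ for all $\mathbf{s}$. (Equivalently, homogenize the linear side with $\bigl((\sum_i x_i)/\tau\bigr)^{s-1}$ before comparing coefficients.) Your ``enough reachable configurations'' step also still needs to be carried out, as you acknowledge, but the homogeneous-part shortcut is the step that actually fails as written.
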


Note that $\mathbf{a}_{s \,\mathbf{e}_i}$ can be rewritten as $\mathbf{e}_i \mathbf{A}$. Using the sums of these matrices, we see that $\frac{1}{s}\mathbf{s}\,\mathbf{A}$ computes the replacement changes provided for drawing a given sample $\mathbf{s}$. Thus, the recurrence relation for $\mathbf{X}_n$ can be rewritten as
\begin{equation}
\label{EQ:XQA}
\mathbf{X}_n=\mathbf{X}_{n-1} + \frac{1}{s} \mathbf{Q}_n \mathbf{A},
\end{equation}
where $\mathbf{Q}_n$, when conditioned on $\mathbb{F}_{n-1}$, is a multivariate hypergeometric distributions under Model $\mathcal{M}$. Note that $\E\big[\mathbf{Q}_{n}\, \big|\, \mathbb{F}_{n-1}\big] = \frac{s\, \mathbf{X}_{n-1}}{\tau_{n-1}}$, and so from \cite{KubaX}, the conditional expectation $\E\big[\mathbf{X}_{n}\, \big|\, \mathbb{F}_{n-1}\big]$ aligns exactly with that of (\ref{EQ: EXQA}). This result outlines the \emph{linearity} property we desire.

If a square replacement matrix core $\mathbf{A}$ is irreducible, and ${b}$-row-balanced, then its leading eigenvalue $\lambda_1={b}$ is a simple eigenvalue and its principal left and right eigenvectors are denoted as $\mathbf{v}_1$ and $\mathbf{w}_1$, respectively, and scaled such that $\mathbf{w}_1=\mathbf{1}_k$, a vector of $k$ ones. We define its \emph{core index} to be ratio of the real part of the second highest eigenvalue to the real part of its first, provided by the equation
\begin{equation}
\Lambda^{(\mathbf{A})}:=\frac{\Re(\lambda_2)}{\Re{(\lambda_1)}}=\frac{\Re(\lambda_2)}{{b}}.
\label{EQ:UI}
\end{equation}
We consider the core index to be \emph{small} if $\Lambda^{(\mathbf{A})} <\nicefrac{1}{2}$, \emph{critical} if $\Lambda^{(\mathbf{A})}=\nicefrac{1}{2}$, and \emph{large} if $\Lambda^{(\mathbf{A})}> \nicefrac{1}{2}$.

For our covariance matrix, we define 
\begin{align*}
\hat{\mathbf{P}} :=\mathbf{I}_k - \mathbf{v}_1\,\mathbf{1}_k^T
\end{align*}
and the symmetric matrix 
\begin{equation}
\mathbf{B}^{(\mathbf{A})} := s^{-2}\mathbf{A}^T  \bm{\mathcal{Q}}\mathbf{A},
\label{EQ: BSym3}
\end{equation}
where $\bm{\mathcal{Q}}= s(s-1) \mathbf{v}_1^T\mathbf{v}_1+s\, \mathbf{diag}(\mathbf{v}_1)$.
Lastly, we use $e^{{s} \mathbf{A}} := \sum_{k=0}^{\infty} \frac{1}{k!} \big({m}\mathbf{A}\big)^k$ for all $m\in \mathbb{R}$, to define the integral
\begin{equation}
\label{EQ:SigAInt3}
\bm{\Sigma}^{(\mathbf{A})} := b \,\int_0^\infty e^{{m} \mathbf{A}^T}\hat{\mathbf{P}}^T\mathbf{B}^{(\mathbf{A})}\hat{\mathbf{P}}\,e^{{m} \mathbf{A}} e^{-\mathscr{b} m}\, dm.
\end{equation}

For urn models with small core index, \cite{Sparks23} shows that the mean and covariance matrix both grow asymptotically linear in nature, and by a modification of techniques presented in \cite{Janson, Janson2020}, the following multivariate central limit theorem can be derived.  

\begin{theorem}
\label{Thm:AffMCLT}
Consider a $k$-color irreducible affine urn that is balanced with ${b}>0$ and involving samples of size $m \ge 1$. Let $\Lambda^{(\mathbf{A})} < \nicefrac{1}{2}$. Then, for samples without replacement, for large $n$,

$$\E [\mathbf{X}_n]= n{b} \mathbf{v}_1 + \mathbf{o}(n^{\nicefrac{1}{2}}),\textnormal{ and}$$

$$\frac{1}{n}\bm{\Sigma}_n  \to  \bm{\Sigma}^{(\mathbf{A})},$$ 
where $\bm{\Sigma}^{(\mathbf{A})}$ is a limiting matrix defined in (\ref{EQ:SigAInt3}), implying $\bm{\Sigma}_n  = n{b} \bm{\Sigma}^{(\mathbf{A})}+\mathbf{o}(n)$. Furthermore, as $n\to\infty$,
\begin{align*}
\frac{1}{\sqrt{n}}(\mathbf{X}_{n}-{b} n \mathbf{v}_1) \convD \, \normal_k \Big(\mathbf{0}_k, \,\bm{\Sigma}^{(\mathbf{A})} \Big),
\end{align*}
\end{theorem}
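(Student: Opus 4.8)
The plan is to follow the martingale method of \cite{Janson, Janson2020} in the affine-scheme adaptation of \cite{Sparks23}, in three stages: mean asymptotics by iterating the conditional-mean recursion, covariance asymptotics by a companion recursion for the second-moment matrix, and asymptotic normality by a martingale central limit theorem. Throughout I write $s$ for the (fixed) sample size, $\mathbf{G}_n := \prod_{i=1}^{n}\bigl(\mathbf{I}_k + \tau_{i-1}^{-1}\mathbf{A}\bigr)$, and use that ${b}$-balance gives $\tau_{i-1} = {b}(i-1)+\tau_0$.

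First I would handle the mean. Taking expectations in (\ref{EQ:XQA}) and using (\ref{EQ: EXQA}) gives $\E[\mathbf{X}_n] = \mathbf{X}_0 \, \mathbf{G}_n$. Writing $\mathbf{A}=\mathbf{T}\mathbf{J}\mathbf{T}^{-1}$ in Jordan form, the product factors as $\mathbf{T}\bigl(\prod_i(\mathbf{I}_k+\tau_{i-1}^{-1}\mathbf{J})\bigr)\mathbf{T}^{-1}$, and on the Jordan block for $\lambda_\ell$ the product telescopes into ratios of Gamma functions whose growth, by the Stirling estimate (\ref{Eq:Stirlingapprox}), is $n^{\lambda_\ell/{b}}$ times a power of $\ln n$ fixed by the block size. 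Irreducibility and balance make $\lambda_1={b}$ simple, so the $\lambda_1$-block produces the order-$n$ term $n{b}\mathbf{v}_1$ (using the normalization $\mathbf{w}_1=\mathbf{1}_k$) while every other block contributes $\bm{\mathcal O}\bigl(n^{\Lambda^{(\mathbf A)}}\ln^{\kappa}n\bigr)$ for some $\kappa\ge 0$; since $\Lambda^{(\mathbf A)}<\nicefrac12$ this remainder is $\mathbf{o}(n^{\nicefrac12})$, giving the stated mean.

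Next I would treat the covariance. Squaring (\ref{EQ:XQA}) gives $\mathbf{X}_n^{T}\mathbf{X}_n = \mathbf{X}_{n-1}^{T}\mathbf{X}_{n-1} + s^{-1}\bigl(\mathbf{X}_{n-1}^{T}\mathbf{Q}_n\mathbf{A}+\mathbf{A}^{T}\mathbf{Q}_n^{T}\mathbf{X}_{n-1}\bigr)+s^{-2}\mathbf{A}^{T}\mathbf{Q}_n^{T}\mathbf{Q}_n\mathbf{A}$. Conditioning on $\mathbb{F}_{n-1}$ and inserting $\E[\mathbf{Q}_n\mid\mathbb{F}_{n-1}]=s\mathbf{X}_{n-1}/\tau_{n-1}$ together with the multivariate hypergeometric second moment $\E[\mathbf{Q}_n^{T}\mathbf{Q}_n\mid\mathbb{F}_{n-1}]=\tfrac{\tau_{n-1}-s}{\tau_{n-1}-1}\bigl(s(s-1)\mathbf{p}^{T}\mathbf{p}+s\,\mathbf{diag}(\mathbf{p})\bigr)$ with $\mathbf{p}=\mathbf{X}_{n-1}/\tau_{n-1}$ (its finite-population prefactor tending to $1$) yields a linear recursion $\mathbf{S}_n=(\mathbf{I}_k+\tau_{n-1}^{-1}\mathbf{A})^{T}\mathbf{S}_{n-1}(\mathbf{I}_k+\tau_{n-1}^{-1}\mathbf{A})+\mathbf{B}_n$ for $\mathbf{S}_n:=\E[\mathbf{X}_n^{T}\mathbf{X}_n]$, where substituting the first-order estimate $\mathbf{S}_{n-1}\approx{b}^2n^2\mathbf{v}_1^{T}\mathbf{v}_1$ gives $\mathbf{B}_n={b}\,\mathbf{B}^{(\mathbf A)}+\mathbf{o}(1)$ with $\bm{\mathcal Q}$ and $\mathbf{B}^{(\mathbf A)}$ as in (\ref{EQ: BSym3}). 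Unrolling the recursion writes $\mathbf{S}_n$ as a sum $\sum_j\mathbf{G}_{j,n}^{T}\mathbf{B}_j\mathbf{G}_{j,n}$ of conjugated increments; I would then subtract $\E[\mathbf{X}_n]^{T}\E[\mathbf{X}_n]$ and apply the projector $\hat{\mathbf P}=\mathbf{I}_k-\mathbf{v}_1\mathbf{1}_k^{T}$, which removes the deterministic direction (since ${b}$-balance forces $\mathbf{X}_n\mathbf{1}_k^{T}=\tau_n$), killing the leading divergence; the substitution $m=\ln(n/j)$ turns the remainder into a Riemann sum converging to the integral (\ref{EQ:SigAInt3}), its convergence at $m\to\infty$ being exactly $2\Re(\lambda_2)<{b}$. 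This gives $\bm{\Sigma}_n=n{b}\,\bm{\Sigma}^{(\mathbf A)}+\mathbf{o}(n)$.

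Finally, for normality I would set $\mathbf{M}_n:=\mathbf{X}_n\mathbf{G}_n^{-1}$, a martingale by (\ref{EQ: EXQA}) with differences $s^{-1}\bigl(\mathbf{Q}_n-s\mathbf{X}_{n-1}/\tau_{n-1}\bigr)\mathbf{A}\mathbf{G}_n^{-1}$; since $s$ is fixed these are uniformly bounded, so after $\sqrt n$-scaling the conditional Lindeberg condition is automatic. The conditional covariances of the scaled partial sums are computed exactly as in the covariance step — using the weak law $\mathbf{X}_{n-1}/\tau_{n-1}\to\mathbf{v}_1$ in probability to replace the random $\mathbf{p}$ by $\mathbf{v}_1$ — and converge in probability to $\bm{\Sigma}^{(\mathbf A)}$, so the martingale CLT together with the Cram\'er--Wold device give $n^{-\nicefrac12}(\mathbf{X}_n-{b}n\mathbf{v}_1)\convD\normal_k(\mathbf{0}_k,\bm{\Sigma}^{(\mathbf A)})$. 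The hard part, and the bottleneck in every stage, is the precise Jordan-decomposition analysis of the products $\mathbf{G}_n$ and of the conjugated-increment sum: one must control the logarithmic factors from non-diagonalizable blocks and show the errors are genuinely $\mathbf{o}(n^{\nicefrac12})$, respectively $\mathbf{o}(n)$, which is exactly where the small-core-index hypothesis $\Lambda^{(\mathbf A)}<\nicefrac12$ enters; a secondary difficulty is taming the multiple-draw, without-replacement hypergeometric second-moment structure and verifying it collapses to $\bm{\mathcal Q}$ — hence to $\mathbf{B}^{(\mathbf A)}$ — in the limit.
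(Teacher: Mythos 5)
The paper contains no proof of Theorem~\ref{Thm:AffMCLT}: it is imported from \cite{Sparks23}, with the CLT attributed to a modification of the techniques of \cite{Janson, Janson2020}, so there is no in-paper argument to compare against line by line. That said, your three-stage plan --- iterate the conditional-mean recursion (\ref{EQ: EXQA}) and analyze $\mathbf{G}_n=\prod_{i=1}^{n}(\mathbf{I}_k+\tau_{i-1}^{-1}\mathbf{A})$ through the Jordan form and the Stirling estimate (\ref{Eq:Stirlingapprox}); derive a second-moment recursion from the multivariate hypergeometric moments of $\mathbf{Q}_n$ and pass to the integral (\ref{EQ:SigAInt3}); then conclude by a martingale CLT with the Cram\'er--Wold device --- is precisely the route the cited source is described as taking, and the objects you produce ($\bm{\mathcal{Q}}$, $\mathbf{B}^{(\mathbf{A})}$, $\hat{\mathbf{P}}$, and the role of $\Lambda^{(\mathbf{A})}<1/2$ in both the $\mathbf{o}(n^{1/2})$ mean error and the convergence of the integral) match the paper's definitions.

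One step as written would fail. You claim the differences of $\mathbf{M}_n=\mathbf{X}_n\mathbf{G}_n^{-1}$ are uniformly bounded, making the Lindeberg condition ``automatic'' after $\sqrt{n}$-scaling. The increments of $\mathbf{X}_n$ are indeed bounded (balance plus fixed sample size), but the martingale differences carry the factor $\mathbf{G}_n^{-1}$, and when $\mathbf{A}$ has eigenvalues with negative real part --- exactly the situation in this paper's application, where $\lambda_2=\dots=\lambda_{k+1}=1-\theta$ --- the matrix $\mathbf{G}_n^{-1}$ grows polynomially (up to logarithmic factors from the nontrivial Jordan blocks) on the complementary subspace, so those differences are not uniformly bounded. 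The standard repair is to work instead with the triangular array of conjugated increments $\Delta_j\mathbf{G}_{j,n}/\sqrt{n}$, where $\mathbf{G}_{j,n}=\prod_{i=j+1}^{n}(\mathbf{I}_k+\tau_{i-1}^{-1}\mathbf{A})$ and $\Delta_j$ is the bounded martingale difference of $\mathbf{X}_j$; their norms are of order $(n/j)^{\Lambda^{(\mathbf{A})}}\ln^{\kappa}(n/j)\,n^{-1/2}$, and uniform negligibility (hence Lindeberg) follows precisely from $\Lambda^{(\mathbf{A})}<1/2$, the same hypothesis you already invoke for the variance. A secondary point: the substitution $\mathbf{S}_{n-1}\approx b^2n^2\mathbf{v}_1^{T}\mathbf{v}_1$ inside the second-moment recursion, and the weak law $\mathbf{X}_{n-1}/\tau_{n-1}\to\mathbf{v}_1$ used for the conditional covariances, must be justified beforehand (e.g., an a priori $\mathbf{o}(n^2)$ bound on the centered second moment from the same recursion, then Chebyshev); as sketched, that step is circular.
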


\section{Application to the Global Containment Profile}
\label{Sec:global}
We now wish to examine the global profile of the hyperrecursive tree, determined by
a raw count of the number of vertices at a particular containment level. Note that this measure cannot be determined without looking at all the vertices in the entire hyperrecursive tree, which leads to a shift from \emph{local} to \emph{global} in \cite{Sparks2}. We define $X_{n,i}^{(\theta)}$ to be the number of vertices contained in exactly $i$ hyperedges at step $n$. In Figure~\ref{Fig:hyper}, we see that $X_{2,1}^{(3)}=2, X_{2,2}^{(3)}=2$, $X_{2,3}^{(3)}=1,\textnormal{ and }X_{2,i}^{(3)}=0$ 
for all $i \ge 4$.

In \cite{Sparks2}, deriving the mean and covariance of first $k$ levels of containment involves lengthy recursive formulas.  However, we may approach this problem instead by constructing a similar random vector $\mathbf{Y}_{n}^{(\theta)}$. Given $k\in \mathbb{N}$, we set the row vector $\mathbf{Y}_n^{(\theta)}$ such that $Y_{n, i}^{(\theta)}=X_{n, i}^{(\theta)}$ for $i\in[k]$ and $Y_{n, (k+1)}^{(\theta)}=\sum_{i=k+1}^{\infty}X_{n, i}^{(\theta)}$, which measures the number of vertices with containment level greater than $k$. With this modification, the global profile of the hyperreccursive tree then relates to an affine urn problem of $(k+1)$ colors with sample size~$(\theta-1)$. 

\begin{proposition}
\label{Prop: YAffine}
The random vector $\mathbf{Y}_n^{(\theta)}$ as defined above is a class of 1-balanced, $(k+1)$-color affine urn models with a sample of size $s=(\theta-1)$.
\end{proposition}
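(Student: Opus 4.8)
## Proof Proposal

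The plan is to exhibit $\mathbf{Y}_n^{(\theta)}$ explicitly as an affine urn model of the type described in Section~\ref{Affine} by identifying the colors, the sample size, the core matrix $\mathbf{A}$, and then verifying the three structural requirements (tenability is automatic here since vertices are only ever added, never removed) together with the $1$-balance and linearity conditions. The colors are the containment levels $1, \dots, k$, plus a $(k+1)$st ``overflow'' color recording all vertices at level $>k$; the sample size is $s = \theta - 1$, reflecting that each incoming vertex selects $\theta-1$ existing vertices to join it in a new hyperedge. The key observation driving everything is that when a vertex currently at containment level $i$ is chosen into the new hyperedge, its level increments to $i+1$; a vertex not chosen keeps its level; and the incoming vertex itself is born at level $1$. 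So a draw that selects $s_i$ vertices of color $i$ (for $i \le k$) and $s_{k+1}$ of the overflow color produces a net change: lose $s_i$ balls of color $i$, gain $s_i$ balls of color $i+1$ (with color $k{+}1$ absorbing the level-$k$ promotions and its own promotions staying in place), and gain one ball of the new color $1$.

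First I would write down the core matrix $\mathbf{A}$, whose $i$th row $\mathbf{a}_{s\mathbf{e}_i}$ encodes the replacement when all $s$ sampled vertices have color $i$: for $i \le k-1$ this row has $-s$ in position $i$, $+s$ in position $i+1$, and $+1$ in position $1$; for $i = k$ it has $-s$ in position $k$, $+s+1$ in position $k+1$ — wait, more carefully: $+s$ in position $k+1$ and the $+1$ newborn in position $1$; for $i = k+1$ the row has simply $+1$ in position $1$ (the chosen overflow vertices move to higher levels but remain in the overflow color, so they do not change their color count), and nothing else changes. Then I would sum each row: for rows $i \le k$ we get $-s + s + 1 = 1$, and for row $k+1$ we get $1$; hence every row sums to $1$, so $\mathbf{A}$ is $1$-row-balanced and $\tau_n^{(\theta)} = n + \theta$ is consistent with \eqref{Eq:total} (total grows by exactly one per step, matching the single newborn vertex).

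Next I would check linearity via Theorem~\ref{thm:Linearity}: I must verify $\mathbf{a}_{\mathbf{s}} = \sum_{i=1}^{k+1} \frac{s_i}{s}\mathbf{a}_{s\mathbf{e}_i}$ for every $\mathbf{s} \in \Delta$. This is where a small amount of care is needed, but it is essentially immediate from the additive structure of the dynamics: the newborn-$+1$-in-position-$1$ contribution appears once in $\mathbf{a}_{\mathbf{s}}$ regardless of $\mathbf{s}$, and on the right-hand side it appears as $\sum_i \frac{s_i}{s} \cdot 1 = 1$ since $\sum s_i = s$; the promotion contributions are linear in the $s_i$ by construction (selecting $s_i$ vertices of color $i$ promotes exactly those $s_i$ vertices, independently across colors), so they match termwise. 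Thus the model is linear with the $\mathbf{A}$ above, and \eqref{EQ: EXQA} applies. Finally I would confirm irreducibility: starting from any configuration, the overflow color is always reachable (repeated promotions), and the newborn mechanism continually injects color $1$, from which colors $2, \dots, k$ are reached by successive promotions, so every color is realizable regardless of initial state.

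The main obstacle — really the only subtle point — is bookkeeping the boundary behavior at colors $k$ and $k+1$: one must be careful that promotion \emph{out of} level $k$ lands in the overflow color, that promotion \emph{within} the overflow color is invisible to the color counts (so selecting an overflow vertex changes nothing except via the newborn), and that the ``diagonal'' entry of $\mathbf{A}$ in row $k+1$ is therefore $0$ rather than $-s+s$. Getting these three edge cases right is what makes the row sums come out to exactly $1$ and what makes the linearity identity hold; once the core matrix is pinned down correctly, the verification of the affine-urn axioms is routine.
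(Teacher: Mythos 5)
Your proposal is correct and follows essentially the same route as the paper: identify the $(k+1)$ colors with sample size $s=\theta-1$, write down the replacement vector and core matrix $\mathbf{A}$ from the promotion-plus-newborn dynamics, observe the row sums give $1$-balance, and verify the affinity condition $\mathbf{a}_{\mathbf{s}} = \sum_{i} \frac{s_i}{s}\mathbf{a}_{s\mathbf{e}_i}$ of Theorem~\ref{thm:Linearity}. Your verification of that identity by splitting each row into the newborn contribution and the (linear-in-$s_i$) promotion contribution is just a tidier organization of the paper's entry-by-entry computation, and your added remarks on tenability and irreducibility are fine but not needed for the statement.
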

\begin{proof}
Consider a sample from the random vector $\mathbf{Y}_n^{(\theta)}$ such that $Y_{n, i}^{(\theta)}=X_{n, i}^{(\theta)}$ for $i\in[k]$ and $Y_{n, (k+1)}^{(\theta)}=\sum_{i=k+1}^{\infty}X_{n, i}^{(\theta)}$, for size $\theta$ and $n\in \mathbb{N}$. Let $s_i$ be the sample of vertices of containment level $i$. At each step, we add a vertex of containment level~1, and for each vertex sampled from containment level $i\le k$, we remove one vertex from that containment level and add it to containment level $(i+1)$. For a vertices measured by  $Y_{n, (k+1)}$, there is no change. Thus, for a sample  $\mathbf{s}= (s_1, s_2, ... ,s_k, s_{k+1})$, we obtain a replacement vector of 
$$\mathbf{a}_\mathbf{s}=(1-s_1, s_1-s_2, ..., s_{k-1}-s_{k},s_{k}).$$  
From this result, we construct a replacement matrix for representing samples which all come from the same color:
\begin{align}
\mathbf{A}=\small\begin{matrix}(\theta-1) \mathbf{e}_1 \cr
                               (\theta-1) \mathbf{e}_2 \cr
			 \vdots\cr
                               	(\theta-1) \mathbf{e}_{k}\cr
					(\theta-1) \mathbf{e}_{k+1}
\end{matrix}
\begin{pmatrix} -(\theta-2) & (\theta-1)&0&\cdots&0&0 \cr
                               	1& -(\theta-1) & (\theta-1)&\cdots&0&0 \cr
			\vdots&\vdots&\vdots&\ddots&\vdots&\vdots \cr
                               	1 & 0&0&\cdots&-(\theta-1)&(\theta-1) \cr
                               	1 & 0&0&\cdots&0&0 \cr
\end{pmatrix}.
\label{EQ:HTA}
\end{align}
To prove both linearity and that $\mathbf{A}$ is the core matrix for an affine urn, we show that $\mathbf{a}_{\mathbf{s}} = \sum_{i=1}^{k} \frac{s_i}{s}\mathbf{a}_{s\,\mathbf{e}_i}$. Given a vector sample $\mathbf{s}$, we obtain
\begin{align*}
\sum_{i=1}^{k+1} \frac{s_i}{s}\mathbf{a}_{s\,\mathbf{e}_i}=&\, \frac{s_1}{\theta-1}
(-(\theta-2), (\theta-1), 0, \dots, 0)\\
&+\frac{s_2}{\theta-1}(1, -(\theta-1), (\theta-1),\dots,0)\\
&+\cdots+\frac{s_{k}}{\theta-1}(1,0, \dots, -(\theta-1), (\theta-1))\\
&+\frac{s_{k+1}}{\theta-1}(1, 0, 0,\dots,0)\\[1 em]
{}
=&\, \Bigl(-\frac{\theta-2}{\theta-1}s_1 + \sum_{i=2}^{k} \frac{s_i}{\theta-1} + \frac{s_k}{\theta-1} ,s_1-s_2, \dots, s_{k-1}-s_{k},s_{k} \Bigr).
\end{align*}
Since $s_k= (\theta-1)- \sum_{i=1}^{k} s_i$, the first entry reduces to 
\begin{align*}
-\frac{\theta-2}{\theta-1}s_1 + \sum_{i=2}^{k} \frac{s_i}{\theta-1} + \frac{(\theta-1)- \sum_{i=1}^{k} s_i}{\theta-1}
= 1 - \frac{\theta-2}{\theta-1}s_1 - \frac{1}{\theta-1}s_1 = 1-s_1,
\end{align*}
therefore proving the urn's affinity.
\end{proof}

\begin{corollary}
\label{Cor:EigenHT}
The core matrix defined as (\ref{EQ:HTA}) has eigenvalues 1 and $(1-\theta)$ with multiplicities 1 and $k$, respectively.
\end{corollary}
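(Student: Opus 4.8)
The plan is to compute the characteristic polynomial $p(\lambda)=\det(\lambda\mathbf{I}_{k+1}-\mathbf{A})$ of the matrix $\mathbf{A}$ in~(\ref{EQ:HTA}) in closed form; the eigenvalues and their algebraic multiplicities then follow at once. Concretely, I will show that $p(\lambda)=(\lambda-1)(\lambda+\theta-1)^{k}$.

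The first step exploits the fact that every row of $\mathbf{A}$ sums to $1$ (immediate from~(\ref{EQ:HTA}), and the $1$-balancedness already invoked in Proposition~\ref{Prop: YAffine}), so every row of $\lambda\mathbf{I}_{k+1}-\mathbf{A}$ sums to $\lambda-1$. Adding columns $2,\dots,k+1$ of $\lambda\mathbf{I}_{k+1}-\mathbf{A}$ to its first column turns that column into the constant vector $(\lambda-1)\mathbf{1}_{k+1}^{T}$, which factors out of the determinant. This leaves $p(\lambda)=(\lambda-1)\det\mathbf{N}$, where $\mathbf{N}$ is the $(k+1)\times(k+1)$ matrix whose first column is all ones, with $-(\theta-1)$ along the superdiagonal, with $\lambda+\theta-1$ in the diagonal entries $(2,2),\dots,(k,k)$ and $\lambda$ in the entry $(k+1,k+1)$, and zeros elsewhere. (Equivalently, the factor $\lambda-1$ reflects the right eigenvector $\mathbf{1}_{k+1}^{T}$ for the eigenvalue $1$.)

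The second step evaluates $\det\mathbf{N}$ by cofactor expansion along its first row $(1,-(\theta-1),0,\dots,0)$. Deleting row $1$ and column $1$ leaves an upper-triangular matrix of determinant $(\lambda+\theta-1)^{k-1}\lambda$, while deleting row $1$ and column $2$ leaves a matrix of exactly the same shape as $\mathbf{N}$ but of size $k$. Writing $D_m$ for the determinant of the $m\times m$ matrix of that shape, with the convention $D_1:=1$, the expansion gives the recursion $D_m=(\lambda+\theta-1)^{m-2}\lambda+(\theta-1)\,D_{m-1}$. Since $D_2=\lambda+\theta-1$, a one-line induction yields $D_m=(\lambda+\theta-1)^{m-1}$ for all $m\ge 2$; in particular $\det\mathbf{N}=D_{k+1}=(\lambda+\theta-1)^{k}$, so $p(\lambda)=(\lambda-1)(\lambda+\theta-1)^{k}$. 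Reading off the roots, $\mathbf{A}$ has eigenvalue $1$ with multiplicity $1$ and eigenvalue $1-\theta=-(\theta-1)$ with multiplicity $k$.

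The computation is wholly elementary and I do not anticipate a genuine obstacle; the only place demanding a little care is the index bookkeeping in the cofactor expansion — verifying that deleting row $1$ and column $2$ really does reproduce the lower-dimensional instance of the same matrix, and fixing the base case correctly (most safely by evaluating $D_2$ by hand). As a consistency check, $\operatorname{tr}\mathbf{A}=-(\theta-2)-(k-1)(\theta-1)=1+k(1-\theta)$, which is exactly the sum of the claimed eigenvalues.
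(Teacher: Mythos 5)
Your computation is correct, and every step checks out: since each row of $\mathbf{A}$ in (\ref{EQ:HTA}) sums to $1$, the column operation does turn the first column of $\lambda\mathbf{I}_{k+1}-\mathbf{A}$ into $(\lambda-1)\mathbf{1}_{k+1}^{T}$; the minor obtained by deleting row $1$ and column $2$ of your $\mathbf{N}$ really is the same bidiagonal-plus-ones shape one size down (I verified the re-indexing), the minor from deleting row $1$ and column $1$ is upper bidiagonal with determinant $(\lambda+\theta-1)^{k-1}\lambda$, and the recursion $D_m=(\lambda+\theta-1)^{m-2}\lambda+(\theta-1)D_{m-1}$ with $D_2=\lambda+\theta-1$ gives $D_m=(\lambda+\theta-1)^{m-1}$, hence $p(\lambda)=(\lambda-1)(\lambda+\theta-1)^{k}$; the trace check $1+k(1-\theta)$ is also right. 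For comparison: the paper offers no argument at all for Corollary~\ref{Cor:EigenHT} — it is stated as an immediate consequence of the core-matrix construction in Proposition~\ref{Prop: YAffine} and the eigenvalues are simply used downstream (in the Jordan decomposition inside the proof of Theorem~\ref{Theo:variance} and in the $k=3$ example) — so your determinant calculation supplies the verification the paper leaves implicit, and it even yields the explicit characteristic polynomial. One caveat worth keeping in mind (not a gap in your proof, since the statement only concerns algebraic multiplicities): the paper later relies on $(1-\theta)$ carrying a \emph{single} $k\times k$ Jordan block, i.e.\ geometric multiplicity $1$; your argument does not address that, though it follows quickly by noting that $\mathbf{A}+(\theta-1)\mathbf{I}_{k+1}$ has rank $k$.
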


\subsection{Stochastic Recurrences and the First Two Moments}
\label{Subsec:stoch}
To observe how the different containment levels interact,
we investigate the mean of the row vector $\mathbf{X}_{n}^{(\theta)}$ along with its covariance matrix and asymptotic distribution. From here, we establish a central limit theorem for the vertices at the first $k$ levels of containment. Let~$Q_{n,i}^{(\theta)}$ be the number of vertices at containment level $i\in [k]$ that appear in the sample chosen to construct the $n^{th}$ hyperedge. Using (\ref{EQ:HTA}), we see that
\begin{equation}
X_{n,1}^{(\theta)} = X_{n-1,1}^{(\theta)} - Q_{n,1}^{(\theta)} + 1,
\label{Eq:stoch1}
\end{equation}
\begin{equation}
X_{n,j}^{(\theta)} = X_{n-1,j}^{(\theta)} - Q_{n,j} ^{(\theta)}+  Q_{n,j-1}^{(\theta)}
\label{Eq:stoch2}
\end{equation}
for each $j=2, \dots, k.$ 

Thus, each vertex at containment level 1 in the sample upgrades to containment level 2, each vertex at containment level 2 in the sample upgrades to containment level 3, and so on, with the newly added vertex $n$ arriving at containment level 1. The pair of stochastic equations~(\ref{Eq:stoch1})--(\ref{Eq:stoch2})
is sufficient to determine the means and the quadratic order moments asymptotically.

\begin{theorem}
\label{Prop:mean}
Let $X_{n,i}^{(\theta)}$ be the number of vertices  contained in exactly $i$ hyperedges, for $i\ge1$, of a hyperrecursive tree of edge size $\theta$. Then, the asymptotic solution for the mean is given by
\begin{align*}
\E\big[X_{n,i}^{(\theta)}\big]= \frac{(\theta-1)^{i-1}}{\theta^i}(n+\theta) + \mathcal{O}\Big(\frac{\ln^{i-1}(n)}{n}\Big).
\end{align*}
Furthermore, the exact values for the first two containment levels are 
\begin{align}
\label{EQ:Means}
\small\begin{pmatrix}
     \E\big[X_{n,1}^{(\theta)}\big] \medskip\\
     \E\big[X_{n,2}^{(\theta)}\big]
   \end{pmatrix}  =&\,\small \begin{pmatrix} \displaystyle
     \frac 1 \theta\,  n + 1 + (\theta-1) \, \Gamma (\theta) \frac {\Gamma(n+1)}{
                \Gamma(n+\theta)} \medskip \\
     \displaystyle  \frac {(\theta-1)} {\theta^2} ( n + \theta) 
              +   \Gamma (\theta) \frac {\Gamma(n+1)}{
                \Gamma(n+\theta)} \Big((\theta-1)^2 H_n - \frac {\theta -1}
                       \theta \Big) 
   \end{pmatrix}.
\end{align}
\end{theorem}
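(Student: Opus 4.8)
The plan is to exploit the linearity established in Proposition~\ref{Prop: YAffine} together with the recurrence \eqref{EQ: EXQA} to obtain an exact recursion for the mean vector, and then extract the asymptotics via the Gamma-ratio estimate \eqref{Eq:Stirlingapprox}. First I would observe that since $\mathbf{Y}_n^{(\theta)}$ is a $1$-balanced affine urn with core matrix $\mathbf{A}$ given in \eqref{EQ:HTA}, taking expectations in \eqref{EQ: EXQA} and using $\tau_{n-1} = (n-1)+\theta$ (from \eqref{Eq:total}, noting the urn total is $n+\theta$ so after $n-1$ insertions it is $n-1+\theta$) gives
\begin{align*}
\E\big[\mathbf{Y}_n^{(\theta)}\big] = \E\big[\mathbf{Y}_{n-1}^{(\theta)}\big]\Bigl(\mathbf{I}_{k+1} + \tfrac{1}{n-1+\theta}\mathbf{A}\Bigr).
\end{align*}
Since $Y_{n,i}^{(\theta)} = X_{n,i}^{(\theta)}$ for $i\le k$, it suffices to track the first $k$ coordinates. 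The cleanest route is componentwise: equations \eqref{Eq:stoch1}--\eqref{Eq:stoch2} together with $\E[Q_{n,j}^{(\theta)}\mid\mathbb{F}_{n-1}] = (\theta-1)X_{n-1,j}^{(\theta)}/\tau_{n-1}$ yield, writing $\mu_{n,j} := \E[X_{n,j}^{(\theta)}]$,
\begin{align*}
\mu_{n,1} &= \mu_{n-1,1}\Bigl(1 - \tfrac{\theta-1}{n-1+\theta}\Bigr) + 1, \\
\mu_{n,j} &= \mu_{n-1,j}\Bigl(1 - \tfrac{\theta-1}{n-1+\theta}\Bigr) + \tfrac{\theta-1}{n-1+\theta}\,\mu_{n-1,j-1}, \quad 2\le j\le k.
\end{align*}

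Next I would solve these recurrences in order $j=1,2,\dots$. The homogeneous multiplier is $\prod_{m}\bigl(1-\tfrac{\theta-1}{m+\theta-1}\bigr) = \prod_m \tfrac{m}{m+\theta-1}$, which telescopes into a ratio of Gamma functions $\Gamma(n+1)\Gamma(\theta)/\Gamma(n+\theta)$ (up to the appropriate shift of index). For $j=1$ the particular solution is an explicit sum; guessing the fixed point $\mu_{n,1}\sim n/\theta$ and solving exactly recovers the first line of \eqref{EQ:Means}, where the Gamma-ratio correction term absorbs the initial condition $\mu_{0,1}$ (there are $\theta$ vertices all at containment level~$1$, so $X_{0,1}^{(\theta)} = \theta$, $X_{0,i}^{(\theta)}=0$ for $i\ge2$). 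For general $j$ one proceeds inductively: substituting the leading term $\mu_{n-1,j-1}\sim \tfrac{(\theta-1)^{j-2}}{\theta^{j-1}}(n-1+\theta)$ into the inhomogeneous term and solving gives a leading term $\tfrac{(\theta-1)^{j-1}}{\theta^j}(n+\theta)$ plus lower-order corrections; the appearance of $\ln^{i-1}(n)$ in the error term comes from the fact that each level of the recursion convolves the Gamma-ratio factor (which is $\Theta(n^{-(\theta-1)})$ for $\theta\ge2$, or involves a harmonic number when $\theta=2$) against a slowly varying factor, producing one extra power of $\ln n$ per level, exactly as the $H_n$ term already shows up at level $2$ in \eqref{EQ:Means}. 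To pin down the exact level-$2$ formula I would carry the $j=1$ exact expression through the level-$2$ recursion and sum, using $\sum_m \tfrac{1}{m+\theta}\cdot\tfrac{\Gamma(m+\theta)}{\Gamma(m+1)}$-type identities and the definition of $H_n$; the constant $-\tfrac{\theta-1}{\theta}$ is fixed by matching at $n$ small.

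The main obstacle will be bookkeeping the error terms across the $k$ nested recursions cleanly enough to justify the uniform bound $\mathcal{O}(\ln^{i-1}(n)/n)$ — in particular, verifying that the induction hypothesis at level $j-1$ (leading term plus $\mathcal{O}(\ln^{j-2}(n)/n)$) propagates to level $j$ with exactly one more logarithm, which requires care with how the summation operator $\sum_{m\le n}\tfrac{\Gamma(m)\Gamma(n+\theta)}{\Gamma(m+\theta)\Gamma(n)}(\cdot)$ acts on terms of the form $n^{-1}\ln^{j-2}(n)$. The asymptotic tools \eqref{Eq:harmonic}, \eqref{Eq:harmonic2}, and especially \eqref{Eq:Stirlingapprox} (valid even when the shift grows slowly) are exactly what is needed here, so the remaining work is careful but routine. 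An alternative, which I would use as a cross-check, is to diagonalize the $\mathbf{o}(n^{1/2})$ statement of Theorem~\ref{Thm:AffMCLT}: Corollary~\ref{Cor:EigenHT} gives eigenvalues $1$ and $1-\theta$, so the core index is $\Lambda^{(\mathbf{A})} = \Re(1-\theta)/1 < 1/2$ for $\theta\ge2$ (indeed it is negative), the hypothesis of Theorem~\ref{Thm:AffMCLT} holds, and the leading term $n\mathbf{v}_1$ with $b=1$ must have $i$-th coordinate $(\theta-1)^{i-1}/\theta^i$ — one checks this is the left eigenvector for eigenvalue $1$, normalized so its coordinates sum appropriately — which confirms the leading asymptotics independently of the recursion computation.
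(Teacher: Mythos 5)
Your proposal is correct and follows essentially the same route as the paper: componentwise mean recursions from the stochastic equations \eqref{Eq:stoch1}--\eqref{Eq:stoch2} with the hypergeometric expectation, exact solution of the first two levels, then induction on the containment level by solving the standard recurrence $y_n = g_n y_{n-1} + h_n$ via telescoping Gamma ratios, with the left eigenvector of $\mathbf{A}$ serving only to confirm the leading coefficients. The one point of care (which you flag yourself) is the logarithm bookkeeping: the extra $\ln n$ per level genuinely appears only when $\theta=2$, while for $\theta>2$ the convolved error stays at $\mathcal{O}(\ln^{r-1}(n)/n)$, exactly the case split the paper makes before asserting the common bound $\mathcal{O}(\ln^{i-1}(n)/n)$.
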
 

\begin{proof}
While the asymptotic means may be found through the principal left eigenvector of $\mathbf{A}$ from (\ref{EQ:HTA}), finding definite bounds for the asymptotic orders are best achieved by a recursive approach. Using the process performed in \cite{Sparks2}, the exact values for the first two containment levels are found to be (\ref{EQ:Means}), which produce the asymptotic results as stated above.

Now, let the asymptotic result be true for all $n \in \mathbb{N}$ for $i=1, \dots, r$. By way of induction, we use this assumption to bootstrap results for $i=r+1$. Then, we have
\begin{align*}
\E\big[X_{n,r+1}^{(\theta)}\,|\,\mathbb{F}_{n-1} \big]=&\, \E\big[X_{n-1,r+1}^{(\theta)}\,|\,\mathbb{F}_{n-1} \big]- \E\big[Q_{n,r+1}^{(\theta)}\,|\,\mathbb{F}_{n-1} \big]+ \E\big[Q_{n,r}^{(\theta)}\,|\,\mathbb{F}_{n-1} \big] \\[1 em]
{}
=&\, \frac{n}{n+\theta-1} \E\big[X_{n-1,r+1}^{(\theta)}\,|\,\mathbb{F}_{n-1} \big]+ \frac{\theta-1}{n+\theta-1} \E\big[X_{n-1,r}^{(\theta)}\,|\,\mathbb{F}_{n-1} \big]\\[1 em]
{}
=&\, \frac{n}{n+\theta-1} \E\big[X_{n-1,r+1}^{(\theta)}\,|\,\mathbb{F}_{n-1} \big]+ \frac{(\theta-1)^{r}}{\theta^{r}}+ \mathcal{O}\Big(\frac{\ln^{r-1}(n)}{n^2}\Big).
\end{align*}
For containment level to be solved, it has to await for the solution of the previous level so that it may be bootstrapped into the equation. These equations follow the standard form 
\begin{equation}
y_n = g_n y_{n-1} + h_n,
\label{Eq:standard}
\end{equation}
with solution
$$y_n = \sum_{i=1}^n h_i\prod_{j=i+1}^n g_j + y_0 \prod_{j=1}^n g_j. $$

From (\ref{Eq:standard}), we obtain the our solution.
\begin{align*}
\E\big[X_{n,r+1}^{(\theta)}\,|\,\mathbb{F}_{n-1} \big]=&\, 0 + \sum_{\ell=1}^{n} \Big(\prod_{j=\ell+1}^{n} \Big(\frac{j}{j+\theta-1}\Big)\Big) \Big(\frac{(\theta-1)^{r}}{\theta^{r}}+ \mathcal{O}\Big(\frac{\ln^{r-1}(\ell)}{\ell^2}\Big) \Big) \\[1 em]
{}
=&\, \frac{\Gamma(n+1)}{\Gamma(n+\theta)}  \Bigg(\frac{(\theta-1)^{r}}{\theta^{r}} \sum_{\ell=1}^{n}\frac{\Gamma(\ell+\theta)}{\Gamma(\ell+1)} +\sum_{\ell=1}^{n}\frac{\Gamma(\ell+\theta)}{\Gamma(\ell+1)}\mathcal{O}\Big(\frac{\ln^{r-1}(\ell)}{\ell^2}\Big)   \Bigg)\\[1 em]
{}
=&\,\frac{\Gamma(n+1)}{\Gamma(n+\theta)}  \Big(\frac{(\theta-1)^{r}}{\theta^{r}}\Big) \Big(\frac{\Gamma(n+\theta+1)}{\theta\,\Gamma(n+1)} -\frac{\Gamma(\theta+1)}{\theta\,\Gamma(1)}\Big) \\
& + \frac{\Gamma(n+1)}{\Gamma(n+\theta)}\sum_{\ell=1}^{n}\frac{\Gamma(\ell+\theta)}{\Gamma(\ell+1)}\mathcal{O}\Big(\frac{\ln^{r-1}(\ell)}{\ell^2}\Big)  .
\end{align*}
Note that 
\begin{align*}
\frac{\Gamma(n+1)}{\Gamma(n+\theta)}\sum_{\ell=1}^{n}\frac{\Gamma(\ell+\theta)}{\Gamma(\ell+1)}\mathcal{O}\Big(\frac{\ln^{r-1}(\ell)}{\ell^2}\Big)=&\,
\begin{cases}\mathcal{O}\big(\frac{\ln^{r}(\ell)}{\ell}\big) & \textnormal{if } \theta=2; \medskip\\
\mathcal{O}\big(\frac{\ln^{r-1}(\ell)}{\ell}\big), & \textnormal{if } \theta>2, 
\end{cases}
\end{align*}
and so we get
\begin{align*}
\E\big[X_{n,r+1}^{(\theta)}\,|\,\mathbb{F}_{n-1} \big] =&\, \frac{(\theta-1)^{r}}{\theta^{r+1}}(n+\theta) +\mathcal{O}(n^{1-\theta}) + \mathcal{O}\Big(\frac{\ln^{r}(\ell)}{\ell}\Big) \\
=&\, \frac{(\theta-1)^{r}}{\theta^{r+1}}(n+\theta)+ \mathcal{O}\Big(\frac{\ln^{r}(\ell)}{\ell}\Big),
\end{align*}
thus proving our result for all $i\in\mathbb{N}$.
\end{proof}
\begin{corollary}
\label{Cor:EVHRT}
Consider the random vector $\mathbf{Y}_n^{(\theta)}$ defined in Proposition \ref{Prop: YAffine}, for a given $k\in\mathbb{N}$. Then, the principal left eigenvector of its affine core matrix $\mathbf{A}$ is $\mathbf{v}_1$, such that
\begin{align*}
{v}_{1,i}=\begin{cases} \frac{(\theta-1)^{{i-1}}}{\theta^{i}}, & \textnormal{if } i \in [k]; \medskip\\
\big(\frac{\theta-1}{\theta}\big)^k, & \textnormal{if } i=k+1.
\end{cases}
\end{align*}
\end{corollary}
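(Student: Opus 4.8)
The plan is to verify directly that the vector $\mathbf{v}_1$ in the statement is the normalized principal left eigenvector of the core matrix $\mathbf{A}$ of (\ref{EQ:HTA}). Because the urn of Proposition \ref{Prop: YAffine} is $1$-balanced, its leading eigenvalue is $\lambda_1=b=1$, so it suffices to check the left-eigenvector equation $\mathbf{v}_1\mathbf{A}=\mathbf{v}_1$ together with the normalization $\sum_{i=1}^{k+1}v_{1,i}=1$ (equivalently $\mathbf{v}_1\mathbf{w}_1=1$ for $\mathbf{w}_1=\mathbf{1}_{k+1}$), which is forced by $\E[\mathbf{Y}_n^{(\theta)}]=n\mathbf{v}_1+\mathbf{o}(n^{\nicefrac12})$ (Theorem \ref{Thm:AffMCLT} with $b=1$) together with the size identity $\tau_n^{(\theta)}=n+\theta$ of (\ref{Eq:total}). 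Since Corollary \ref{Cor:EigenHT} shows the eigenvalue $1$ is simple, these two facts determine $\mathbf{v}_1$ uniquely.

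First I would write the identity $\mathbf{v}_1\mathbf{A}=\mathbf{v}_1$ column by column, exploiting the sparsity of (\ref{EQ:HTA}). For a column $j$ with $2\le j\le k$ only the super-diagonal entry $(\theta-1)$ from row $j-1$ and the diagonal entry $-(\theta-1)$ from row $j$ contribute, so the equation reads $(\theta-1)v_{1,j-1}-(\theta-1)v_{1,j}=v_{1,j}$, i.e. the geometric recurrence $v_{1,j}=\tfrac{\theta-1}{\theta}v_{1,j-1}$. The last column $j=k+1$ is structurally different: its only nonzero entry is the $(\theta-1)$ in row $k$ (the diagonal entry $A_{k+1,k+1}$ equals $0$), which gives $v_{1,k+1}=(\theta-1)v_{1,k}$ — this is precisely why the final coordinate breaks the geometric pattern. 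Solving the recurrence with a free parameter $v_{1,1}$ and then imposing $\sum_i v_{1,i}=1$ forces, after summing the finite geometric series, $v_{1,1}=1/\theta$, which turns the formulas into exactly $v_{1,i}=(\theta-1)^{i-1}/\theta^{i}$ for $i\in[k]$ and $v_{1,k+1}=\big(\tfrac{\theta-1}{\theta}\big)^{k}$.

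It then remains to verify the column-$1$ equation, which is automatic: summing all $k+1$ column equations yields $\mathbf{v}_1\mathbf{A}\mathbf{1}_{k+1}^{T}=\mathbf{v}_1\mathbf{1}_{k+1}^{T}$, a tautology for every row vector since $\mathbf{A}$ is row-balanced with $\mathbf{A}\mathbf{1}_{k+1}^{T}=\mathbf{1}_{k+1}^{T}$; hence satisfying columns $2,\dots,k+1$ forces column $1$. As an independent check, one can also recover the corollary from Theorems \ref{Prop:mean} and \ref{Thm:AffMCLT}: since $\E[\mathbf{Y}_n^{(\theta)}]=n\mathbf{v}_1+\mathbf{o}(n)$, the coordinate $v_{1,i}$ is $\lim_{n\to\infty}\E[Y_{n,i}^{(\theta)}]/n$, which equals $(\theta-1)^{i-1}/\theta^{i}$ for $i\in[k]$ by Theorem \ref{Prop:mean}, while for $i=k+1$ the exact identity $\sum_{i=1}^{k}X_{n,i}^{(\theta)}+Y_{n,k+1}^{(\theta)}=\tau_n^{(\theta)}=n+\theta$ gives $v_{1,k+1}=1-\sum_{i=1}^{k}(\theta-1)^{i-1}/\theta^{i}=\big(\tfrac{\theta-1}{\theta}\big)^{k}$, needing no interchange of limit and infinite sum. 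There is no serious obstacle here; the only points requiring a little care are the geometric-series bookkeeping that pins down $v_{1,1}$ from the normalization, and the fact that the last column of $\mathbf{A}$ must be treated on its own rather than as a continuation of the recurrence.
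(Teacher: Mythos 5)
Your proof is correct, and it supplies an argument the paper leaves implicit: Corollary \ref{Cor:EVHRT} appears with no separate proof, positioned as a consequence of Theorem \ref{Prop:mean} — the intended reading being that $\E[\mathbf{Y}_n^{(\theta)}]=n\mathbf{v}_1+\mathbf{o}(n^{\nicefrac{1}{2}})$ from Theorem \ref{Thm:AffMCLT} (with $b=1$) identifies $v_{1,i}$ for $i\in[k]$ with the leading coefficients $(\theta-1)^{i-1}/\theta^{i}$ of the means, while the last coordinate follows from the normalization forced by $\tau_n^{(\theta)}=n+\theta$; this is exactly your ``independent check,'' so that part of your write-up coincides with the paper's implied derivation. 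Your primary route is different and more elementary: you verify $\mathbf{v}_1\mathbf{A}=\mathbf{v}_1$ directly from the sparsity of (\ref{EQ:HTA}), and the details check out — columns $2,\dots,k$ give $v_{1,j}=\frac{\theta-1}{\theta}v_{1,j-1}$, column $k+1$ gives $v_{1,k+1}=(\theta-1)v_{1,k}$ (correctly treated as breaking the geometric pattern, since $A_{k+1,k+1}=0$), the normalization $\sum_i v_{1,i}=1$ pins down $v_{1,1}=1/\theta$, the observation that row-balance ($\mathbf{A}\mathbf{1}_{k+1}^{T}=\mathbf{1}_{k+1}^{T}$) makes the first column equation redundant is valid, and simplicity of the eigenvalue $1$ (Corollary \ref{Cor:EigenHT}) gives uniqueness. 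The direct verification buys independence from the asymptotic machinery (it needs only linear algebra, and in fact substantiates the remark in the proof of Theorem \ref{Prop:mean} that the asymptotic means ``may be found through the principal left eigenvector''), whereas the paper's implicit route is shorter on computation but leans on Theorems \ref{Prop:mean} and \ref{Thm:AffMCLT}; having both, as you present them, is the most complete treatment.
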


With our mean vector determined, we now develop our associated covariance matrix. In \cite{Sparks2}, the first two containment levels were determined using recurrence relations, but with our link to affine urns established, we generalize here a solution for the first $k$ containment levels, where $k \in \mathbb{N}$.   

\begin{theorem}
\label{Theo:variance}
Let $X_{n,i}^{(\theta)}$ be the number of vertices contained in exactly $i$ hyperedges in a hyperrecursive tree with parameter $\theta$ at age $n$, for $i\in [k]$. Let ${\bf \Sigma}_n^{(\theta)} $ be the corresponding covariance matrix for the global profile of hyperrecursive tree, and let $\mathbf{Y}_n^{(\theta)}$ be defined in Proposition 5.1, with $\mathbf{A}$ as the core matrix associated with the affine urn as defined in (\ref{EQ:HTA}). Then,
\begin{align*}
\bm{\Sigma}_n^{(\theta)}  = n \bm{\Sigma}_\infty^{(\theta)}+\mathbf{o}(n),
\end{align*} 
where $\bm{\Sigma}_\infty^{(\theta)}$ is the upper-left $k\times k$ block of 
$\bm{\Sigma}^{(\mathbf{A})}$, 
the limiting matrix defined in (\ref{EQ:SigAInt3}).
\end{theorem}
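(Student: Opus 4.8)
The plan is to read the covariance asymptotics off Theorem~\ref{Thm:AffMCLT} applied to the auxiliary urn $\mathbf{Y}_n^{(\theta)}$, and then to pass from the $(k+1)$-dimensional covariance matrix down to the $k$-dimensional one by restricting to the first $k$ coordinates. The point of introducing $\mathbf{Y}_n^{(\theta)}$ is exactly that $\mathbf{X}_n^{(\theta)}$ itself has unboundedly many ``colors'' and is not an affine urn, whereas $\mathbf{Y}_n^{(\theta)}$ is, and the two agree in their first $k$ entries.

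First I would check that $\mathbf{Y}_n^{(\theta)}$ meets every hypothesis of Theorem~\ref{Thm:AffMCLT}. By Proposition~\ref{Prop: YAffine} it is a $1$-balanced, $(k+1)$-color affine urn with sample size $s=\theta-1\ge 1$ and core matrix $\mathbf{A}$ given in~(\ref{EQ:HTA}); it remains to verify tenability, irreducibility, and smallness of the core index. Tenability is immediate from the replacement vector $\mathbf{a}_{\mathbf{s}}=(1-s_1,\,s_1-s_2,\,\dots,\,s_{k-1}-s_k,\,s_k)$, whose coordinates satisfy $a_i\ge -s_i$, together with $\tau_{n-1}=n-1+\theta\ge \theta-1=s$ for all $n\ge 1$, so a sample of size $s$ can always be drawn. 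Irreducibility follows by inspecting the support of $\mathbf{A}$: the entry from color $i$ to color $i+1$ equals $\theta-1>0$ for $1\le i\le k$, and every row has a positive entry in column $1$, so the digraph $\{1\to 2\to\cdots\to k\to k+1\to 1\}$ is strongly connected. Finally, Corollary~\ref{Cor:EigenHT} gives eigenvalues $\lambda_1=1$ (simple, equal to $b$) and $\lambda_2=1-\theta$ with multiplicity $k$, so $\Lambda^{(\mathbf{A})}=(1-\theta)/1\le -1<\nicefrac{1}{2}$; the core index is small, and in particular the integral~(\ref{EQ:SigAInt3}) defining $\bm{\Sigma}^{(\mathbf{A})}$ converges.

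With the hypotheses in hand, Theorem~\ref{Thm:AffMCLT} yields $\tfrac1n\bm{\Sigma}_n^{(\mathbf{Y})}\to\bm{\Sigma}^{(\mathbf{A})}$, that is $\bm{\Sigma}_n^{(\mathbf{Y})}=n\,\bm{\Sigma}^{(\mathbf{A})}+\mathbf{o}(n)$, where $\bm{\Sigma}_n^{(\mathbf{Y})}$ is the full $(k+1)\times(k+1)$ covariance matrix of $\mathbf{Y}_n^{(\theta)}$. By the definition of $\mathbf{Y}_n^{(\theta)}$ we have $Y_{n,i}^{(\theta)}=X_{n,i}^{(\theta)}$ for every $i\in[k]$, hence $\Cov\big(X_{n,i}^{(\theta)},X_{n,j}^{(\theta)}\big)=\Cov\big(Y_{n,i}^{(\theta)},Y_{n,j}^{(\theta)}\big)$ for $i,j\in[k]$; equivalently, $\bm{\Sigma}_n^{(\theta)}$ is precisely the upper-left $k\times k$ block of $\bm{\Sigma}_n^{(\mathbf{Y})}$. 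Extraction of a fixed coordinate block is a purely entrywise operation, so it commutes with the entrywise limit above: writing $\bm{\Sigma}_\infty^{(\theta)}$ for the upper-left $k\times k$ block of $\bm{\Sigma}^{(\mathbf{A})}$, we conclude $\bm{\Sigma}_n^{(\theta)}=n\,\bm{\Sigma}_\infty^{(\theta)}+\mathbf{o}(n)$, which is the assertion.

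Essentially everything here is routine bookkeeping once Theorem~\ref{Thm:AffMCLT} is available: the hypotheses are checked directly from (\ref{EQ:HTA}) and Corollary~\ref{Cor:EigenHT}, and the descent to the sub-block is automatic because $\mathbf{o}(n)$ for an $(k+1)\times(k+1)$ matrix means each entry is $o(n)$, so every entry of the principal $k\times k$ sub-block is $o(n)$ as well. The only genuinely load-bearing step is the observation that $\bm{\Sigma}_n^{(\theta)}$ literally coincides with a sub-block of $\bm{\Sigma}_n^{(\mathbf{Y})}$ — i.e. that truncating the containment levels above $k$ into a single aggregate color neither creates nor destroys covariance among levels $1,\dots,k$ — after which no re-derivation of the integral representation or of the underlying martingale argument is required.
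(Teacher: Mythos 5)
Your proposal is correct and follows essentially the same route as the paper: reduce to the auxiliary urn $\mathbf{Y}_n^{(\theta)}$ of Proposition~\ref{Prop: YAffine}, use Corollary~\ref{Cor:EigenHT} to see that the core index is below $\nicefrac{1}{2}$, invoke Theorem~\ref{Thm:AffMCLT}, and read off $\bm{\Sigma}_n^{(\theta)}$ as the upper-left $k\times k$ block of the limiting covariance. If anything, you are more explicit than the paper in verifying tenability, irreducibility, and the block-extraction step (the paper instead spends its space recording the Jordan decomposition of $\mathbf{A}$, which is needed only when one actually computes $\bm{\Sigma}^{(\mathbf{A})}$, as in the ensuing example), so no gap remains.
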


\begin{proof}
In \cite{Sparks2}, the covariance matrix for the first two containment levels is obtained via application of the stochastic
recurrences~(\ref{Eq:stoch1}) and~(\ref{Eq:stoch2}). However, by linking the structure to an affine urn, we attain a solution as follows. Given $k\in \mathbb{N}$, consider the random vector $\mathbf{Y}_n^{(\theta)}$ defined in Proposition \ref{Prop: YAffine}. From this construction, we form the core matrix $\mathbf{A}$ as provided in (\ref{EQ:HTA}). We attain its eigenvalues from Corollary \ref{Cor:EigenHT} and form the principal left eigenvector as provided in Corollary \ref{Cor:EVHRT}. Then, we obtain the Jordan decomposition $\mathbf{A}= \mathbf{T}\mathbf{{J}}\mathbf{T}^{-1}$, where
$$\mathbf{{J}}_{1}=(1),    ~~~~~~~~~~~~~~~~\mathbf{{J}}_{(1-\theta)}=\small\begin{pmatrix} -(\theta-1) & 1&0&\cdots&0&0 \cr
                               	0& -(\theta-1) & 1&\cdots&0&0 \cr
			\vdots&\vdots&\vdots&\ddots&\vdots&\vdots \cr
                               	0 & 0&0&\cdots&-(\theta-1)&1 \cr
                               	0 & 0&0&\cdots&0&-(\theta-1) \cr
\end{pmatrix}. $$

Since the core index $\Lambda^{(\mathbf{A})}<\nicefrac{1}{2}$, we apply Theorem \ref{Thm:AffMCLT} to conclude that the covariance matrix $\bm{\Sigma}_n^{(\mathbf{A})} \sim n  \,\bm{\Sigma}^{(\mathbf{A})}$ for large $n$. Thus, the covariance matrix for the first $k$ containment levels is asymptotic to the matrix $n\,\bm{\Sigma}_\infty^{(\theta)}$, where $\bm{\Sigma}_\infty^{(\theta)}$ is the upper-left $k\times k$ block from $\bm{\Sigma}^{(\mathbf{A})}$. 
\end{proof} 

\begin{example}
To illustrate the use of Theorem \ref{Theo:variance}, we will focus on the first $k=3$ containment levels an examine the variability of  $(X_{n,1}^{(\theta)},X_{n,2}^{(\theta)}, X_{n,3}^{(\theta)})$. Specifically, $\bm{\Sigma}_\infty^{(\theta)}$ is a $3\times3$ symmetric matrix with

\begin{gather}
\bm{\Sigma}_\infty^{(\theta)}[1,1]= \frac{(\theta -1)^2}{\theta ^2 (2 \theta -1)} ~~~~~~~~~~~
\bm{\Sigma}_\infty^{(\theta)}[1,2]= -\frac{(\theta -1)^2 \left(\theta ^2+2 \theta -1\right)}{\theta ^3 (2 \theta -1)^2}\nonumber\\[1 ex]
{}
\bm{\Sigma}_\infty^{(\theta)}[1,3]=-\frac{(\theta -1)^2 \left(\theta ^4+\theta ^3-7 \theta ^2+5 \theta -1\right)}{\theta ^4 (2 \theta -1)^3} \nonumber\\[1 ex]
{}
\bm{\Sigma}_\infty^{(\theta)}[2,2]= \frac{(\theta -1)^2 \left(6 \theta ^4-6 \theta ^3+8 \theta ^2-5 \theta +1\right)}{\theta ^4 (2 \theta -1)^3}\nonumber\\[1 ex]
{}
\bm{\Sigma}_\infty^{(\theta)}[2,3]=-\frac{(\theta -1)^2 \left(3 \theta ^6-16 \theta ^4+32 \theta ^3-24 \theta ^2+8 \theta -1\right)}{\theta ^5 (2 \theta -1)^4}\nonumber \\[1 ex]
{}
\bm{\Sigma}_\infty^{(\theta)}[3,3]=\small{\frac{(\theta -1)^2 \left(26 \theta ^8-74 \theta ^7+112 \theta ^6-152 \theta ^5+170 \theta ^4-121 \theta ^3+50 \theta ^2-11 \theta +1\right)}{\theta ^6 (2 \theta -1)^5}}
\label{cov3}
\end{gather}

To begin, we produce a four-component random vector $\mathbf{Y}^{(\theta)}_n$ such that $\mathbf{Y}^{(\theta)}_{n,i}=\mathbf{X}^{(\theta)}_{n,i}$ for $i\in[3]$ and $\mathbf{Y}^{(\theta)}_{n,4}=\sum_{i=4}^{\infty}\mathbf{X}^{(\theta)}_{n,i}.$  From there, our corresponding affine urn model with $m=(\theta-1)$ drawings has an initial state of $\mathbf{Y}^{(\theta)}_0=(\theta,\,0,\, 0,\, 0)$ and core matrix 

$$\mathbf{A}=\small\begin{pmatrix}-(\theta-2) & (\theta-1)& 0&0 \\
                               	1 & -(\theta-1)&(\theta-1)&0 \\
			1& 0&-(\theta-1)&(\theta-1)\\
					1& 0&0&0
\end{pmatrix}.$$

Here, we have a $1$-balanced affine urn, where the eigenvalues of $\mathbf{A}$ are $\lambda_1=1$ and \\$\lambda_2=-(\theta-1)=\lambda_3=\lambda_4$. The leading left eigenvector is $\mathbf{v}_1=(\frac{1}{\theta}, \,\frac{(\theta-1)}{\theta^2},\, \frac{(\theta-1)^2}{\theta^3}, \,(\frac{\theta-1}{\theta})^3)$. 

Note that $\mathbf{A}$ is not diagonalizable, so our analysis requires using the Jordan decomposition $\mathbf{A}= \mathbf{T}{\mathbf{J}}\mathbf{T}^{-1}$ as described in (\ref{EQ:JD}), where
$$\mathbf{T}={\small 
\begin{pmatrix}
 1 & -(\theta-1) & 1 & 0 \\
 1 & 1 & -\frac{\theta}{\theta-1} & \frac{1}{\theta-1} \\
 1 & 1 & 0 & -\frac{\theta}{(\theta-1)^2} \\
 1 & 1 & 0 & 0 \\
\end{pmatrix}}, ~~~~~~~{\mathbf{J}}={\small \begin{pmatrix}
 1 & 0 & 0 & 0 \\
 0 & -(\theta-1) & 1 & 0 \\
 0 & 0 &-(\theta-1)  &1 \\
 0 & 0 & 0 &-(\theta-1) 
\end{pmatrix}}.$$ 

Again, we set $\mathbf{P}_{\lambda_1}=\mathbf{1}_k^T \mathbf{v}_1$, $\mathbf{B}^{(\mathbf{A})}$ as in (\ref{EQ: BSym3}), and let 
$$e^{s \mathbf{A}}=\mathbf{T}\,{\small \begin{pmatrix}
 e^s & 0 & 0 & 0 \\
 0 & e^{-(\theta-1) s} & e^{-(\theta-1) s} s & \frac{1}{2} e^{-(\theta-1) s} s^2 \\
 0 & 0 & e^{-(\theta-1) s} & e^{-(\theta-1) s} s \\
 0 & 0 & 0 & e^{-(\theta-1) s} 
\end{pmatrix}}
\,\mathbf{T}^{-1}.$$

As $\Lambda^{(\mathbf{A})}<\nicefrac{1}{2}$, we apply Theorem \ref{Thm:AffMCLT} and get the limiting matrix 
\begin{align*}
\bm{\Sigma}^{(\mathbf{A})}=\int_0^\infty e^{{s} \mathbf{A}^T}\hat{\mathbf{P}}^T\mathbf{B}^{(\mathbf{A})}\hat{\mathbf{P}}\,e^{{s} \mathbf{A}} e^{-s}\, ds,
\end{align*}
a matrix with the upper-left $k\times k$ square given by (\ref{cov3}).
\end{example}

\subsection{Multivariate Convergence in Distribution}
In \cite{MahmoudSmythe012}, multivariate convergence in distribution was attained directly through the Cram\'{e}r-Wold Device. However, through the lens of affine urn models, we utilize~$\mathbf{Y}_n^{(\theta)}$ from the proof in Proposition \ref{Prop: YAffine} and apply Theorem \ref{Thm:AffMCLT} to establish convergence in distribution through the methods obtained via Section \ref{Affine}. 

\begin{theorem}Let $\mathbf{X}_{n,i}^{(\theta)}$ be the vector measuring the number of vertices contained in exactly $i$ hyperedges, for $i\in [k]$, of a hyperrecursive tree with hyperedges of size $\theta$ at age $n$. Let $\bm{\mu}_\infty^{(\theta)}$ be the $k$-dimensional vector such that $\mu_{\infty, i}^{(\theta)}=\frac{(\theta-1)^{i-1}}{\theta^i}$ for $i\in [k]$, and $\mathbf{\Sigma}_\infty^{(\theta)}$ be defined as in Theorem \ref{Theo:variance}. Then, as $n\to\infty$, we have
$$\frac{1}{\sqrt{n}}(\mathbf{X}_{n}^{(\theta)} -n\bm{\mu}_\infty^{(\theta)})\,  \convD \, \normal_k \big(\mathbf{0}_k,\mathbf{\Sigma}_\infty^{(\theta)} \big).$$
\end{theorem}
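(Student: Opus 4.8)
The plan is to reduce the statement about $\mathbf{X}_n^{(\theta)}$, the $k$-vector of the first $k$ containment counts, to the multivariate CLT for the affine urn vector $\mathbf{Y}_n^{(\theta)}$ established in Theorem~\ref{Thm:AffMCLT}. First I would recall from Proposition~\ref{Prop: YAffine} that $\mathbf{Y}_n^{(\theta)}$, with $Y_{n,i}^{(\theta)} = X_{n,i}^{(\theta)}$ for $i\in[k]$ and $Y_{n,k+1}^{(\theta)} = \sum_{i\ge k+1} X_{n,i}^{(\theta)}$, is a $1$-balanced, irreducible, $(k+1)$-color affine urn with sample size $s=\theta-1$ and core matrix $\mathbf{A}$ from~(\ref{EQ:HTA}). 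By Corollary~\ref{Cor:EigenHT} the eigenvalues are $1$ (simple) and $1-\theta$ (multiplicity $k$), so the core index is $\Lambda^{(\mathbf{A})} = (\theta-1)/1$\,---\,wait, more precisely $\Re(\lambda_2)/\Re(\lambda_1) = -(\theta-1)/1 < 0 < \nicefrac12$, so the small-core-index hypothesis of Theorem~\ref{Thm:AffMCLT} holds for every $\theta\ge 2$. Hence Theorem~\ref{Thm:AffMCLT} applies to $\mathbf{Y}_n^{(\theta)}$ directly, giving
\begin{align*}
\frac{1}{\sqrt n}\bigl(\mathbf{Y}_n^{(\theta)} - n\,\mathbf{v}_1\bigr) \convD \normal_{k+1}\bigl(\mathbf{0}_{k+1},\,\bm{\Sigma}^{(\mathbf{A})}\bigr),
\end{align*}
where $\mathbf{v}_1$ is the principal left eigenvector from Corollary~\ref{Cor:EVHRT} and $\bm{\Sigma}^{(\mathbf{A})}$ is the integral matrix~(\ref{EQ:SigAInt3}).

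Next I would project. Let $\pi:\mathbb{R}^{k+1}\to\mathbb{R}^k$ be the coordinate projection onto the first $k$ entries, i.e.\ $\pi(\mathbf{z}) = \mathbf{z}\,\mathbf{E}$ where $\mathbf{E}$ is the $(k+1)\times k$ matrix consisting of the first $k$ columns of $\mathbf{I}_{k+1}$. Then by construction $\mathbf{X}_n^{(\theta)} = \pi(\mathbf{Y}_n^{(\theta)})$, and $\pi(\mathbf{v}_1) = \bm{\mu}_\infty^{(\theta)}$ since $v_{1,i} = (\theta-1)^{i-1}/\theta^i = \mu_{\infty,i}^{(\theta)}$ for $i\in[k]$ (Corollary~\ref{Cor:EVHRT}). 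Applying the continuous mapping theorem (or, equivalently, the linearity of weak convergence under the fixed linear map $\pi$) yields
\begin{align*}
\frac{1}{\sqrt n}\bigl(\mathbf{X}_n^{(\theta)} - n\,\bm{\mu}_\infty^{(\theta)}\bigr) = \pi\!\left(\frac{1}{\sqrt n}\bigl(\mathbf{Y}_n^{(\theta)} - n\,\mathbf{v}_1\bigr)\right) \convD \normal_k\bigl(\mathbf{0}_k,\, \mathbf{E}^T\bm{\Sigma}^{(\mathbf{A})}\mathbf{E}\bigr).
\end{align*}
Finally I would identify the limiting covariance: $\mathbf{E}^T\bm{\Sigma}^{(\mathbf{A})}\mathbf{E}$ is precisely the upper-left $k\times k$ block of $\bm{\Sigma}^{(\mathbf{A})}$, which is $\bm{\Sigma}_\infty^{(\theta)}$ by the definition in Theorem~\ref{Theo:variance}. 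This closes the argument.

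None of these steps is technically deep; the only thing requiring care is the bookkeeping that the affine-urn vector $\mathbf{Y}_n^{(\theta)}$ genuinely \emph{agrees} with $\mathbf{X}_n^{(\theta)}$ on the first $k$ coordinates at every finite $n$ (not merely asymptotically) so that the projection identity is exact\,---\,this is guaranteed by the construction in Proposition~\ref{Prop: YAffine}, where lumping containment levels $>k$ into a single color never feeds back into levels $\le k$ under the replacement rule~(\ref{EQ:HTA}). The main obstacle, if any, is a bookkeeping subtlety rather than an analytic one: one must check that the initial urn configuration $\mathbf{Y}_0^{(\theta)} = (\theta,0,\dots,0)$ is consistent with the hyperrecursive tree's initial state (all $\theta$ origin vertices at containment level~$1$), which it is, and that irreducibility of $\mathbf{A}$ holds for all $k$ and all $\theta\ge 2$ so that Theorem~\ref{Thm:AffMCLT} may be invoked\,---\,this follows because the sub-diagonal $1$'s in~(\ref{EQ:HTA}) connect every color back to color~$1$ while the super-diagonal $(\theta-1)$'s move mass forward, making the associated digraph strongly connected. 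With those checks in place, the result is an immediate corollary of Theorem~\ref{Thm:AffMCLT}.
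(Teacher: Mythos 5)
Your proposal is correct and follows essentially the same route as the paper: invoke Theorem~\ref{Thm:AffMCLT} for the augmented $(k+1)$-color affine urn vector $\mathbf{Y}_n^{(\theta)}$ (using Corollaries~\ref{Cor:EigenHT} and~\ref{Cor:EVHRT} to verify the small core index and identify $\mathbf{v}_1$), then pass to the marginal on the first $k$ coordinates to recover the stated limit with covariance $\bm{\Sigma}_\infty^{(\theta)}$. Your added checks (exact agreement of the first $k$ coordinates, the projection/continuous-mapping step, irreducibility, and the initial configuration) only make explicit what the paper leaves implicit.
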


\begin{proof}
First, consider the random vector $\mathbf{Y}_n^{(\theta)}$ as defined in Proposition \ref{Prop: YAffine}. Then, from Corollary \ref{Cor:EigenHT}, the core matrix $\mathbf{A}$ corresponding to this $(k+1)$-vector has a core index is less than $\nicefrac{1}{2}$. By Theorem \ref{Thm:AffMCLT}, we apply Corollary \ref{Cor:EVHRT} to conclude 
\begin{align*}
\frac{1}{\sqrt n}(\mathbf{Y}_{n}^{(\theta)} -n\mathbf{v}_1)\, \convD \, \normal_{k+1} \big(\mathbf{0}_{k+1},\mathbf{\Sigma}^{(\mathbf{A})} \big).
\end{align*}
Since $\mathbf{Y}_n^{(\theta)}$ converges to a $(k+1)$-variable normal distribution, we conclude that its marginal distribution $\mathbf{X}_n^{(\theta)}$ converges to a $k$-variable normal distribution, specifically,
\begin{align*}
\frac{1}{\sqrt n}(\mathbf{X}_{n}^{(\theta)} -n\bm{\mu}_\infty^{(\theta)})\,  \convD \, \normal_k \big(\mathbf{0}_k,\mathbf{\Sigma}_\infty^{(\theta)} \big).
\end{align*}
\end{proof}

The following corollary gives us the the precise values for convergence in distribution for the first three containment levels.

\begin{corollary}Let $(X_{n,1}^{(\theta)},X_{n,2}^{(\theta)}, X_{n,3}^{(\theta)})$ be the number of vertices contained in the first three hyperedges of a hyperrecursive tree
with hyperedges of size $\theta$ at age $n$. Then, as $n\to\infty$, 
we have
$$\frac{1}{\sqrt n}\Big(\big(X_{n,1}^{(\theta)},X_{n,2}^{(\theta)}, X_{n,3}^{(\theta)}\big)-\big(\frac{n}{\theta}, \frac{(\theta-1)n}{\theta^2}, \frac{(\theta-1)^2 n}{\theta^3}\big)\Big)  \convD \, \normal_3 \big((0,0,0),\mathbf{\Sigma}_\infty^{(\theta)} \big),$$ where $\mathbf{\Sigma}_\infty^{(\theta)}$ is equal to (\ref{cov3}).
\end{corollary}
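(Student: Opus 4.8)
The plan is to derive this corollary directly from the multivariate central limit theorem stated just above, specialized to $k=3$, together with the explicit evaluation of the limiting covariance already carried out in the Example. First I would confirm the hypothesis of the small-core-index regime: by Corollary~\ref{Cor:EigenHT} the $4\times 4$ core matrix $\mathbf{A}$ of the urn $\mathbf{Y}_n^{(\theta)}$ from Proposition~\ref{Prop: YAffine} (with $k=3$) has eigenvalues $\lambda_1 = 1$ and $\lambda_2 = -(\theta-1)$, so $\Lambda^{(\mathbf{A})} = \Re(\lambda_2)/b = -(\theta-1) < \nicefrac{1}{2}$ for every integer $\theta \ge 2$. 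Hence the preceding theorem applies with $k=3$ and gives $\frac{1}{\sqrt n}(\mathbf{X}_n^{(\theta)} - n\bm{\mu}_\infty^{(\theta)}) \convD \normal_3(\mathbf{0}_3, \bm{\Sigma}_\infty^{(\theta)})$; since $\mu_{\infty,i}^{(\theta)} = (\theta-1)^{i-1}/\theta^i$, the centering vector is $(\nicefrac{1}{\theta},\, (\theta-1)/\theta^2,\, (\theta-1)^2/\theta^3)$, exactly as in the statement. It therefore remains only to show that $\bm{\Sigma}_\infty^{(\theta)}$ equals the matrix~(\ref{cov3}).

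For that identification I would carry out the symbolic computation outlined in the Example. Set $s = \theta-1$ and take $\mathbf{v}_1 = (\nicefrac{1}{\theta},\, (\theta-1)/\theta^2,\, (\theta-1)^2/\theta^3,\, ((\theta-1)/\theta)^3)$ from Corollary~\ref{Cor:EVHRT}. Form $\hat{\mathbf{P}} = \mathbf{I}_4 - \mathbf{1}_4^T\mathbf{v}_1$, then $\bm{\mathcal{Q}} = s(s-1)\mathbf{v}_1^T\mathbf{v}_1 + s\,\mathbf{diag}(\mathbf{v}_1)$, then $\mathbf{B}^{(\mathbf{A})} = s^{-2}\mathbf{A}^T\bm{\mathcal{Q}}\mathbf{A}$ with $\mathbf{A}$ from~(\ref{EQ:HTA}). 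Using the Jordan decomposition $\mathbf{A} = \mathbf{T}\mathbf{J}\mathbf{T}^{-1}$ and the closed form for $e^{m\mathbf{A}}$ recorded in the Example, the integrand $e^{m\mathbf{A}^T}\hat{\mathbf{P}}^T\mathbf{B}^{(\mathbf{A})}\hat{\mathbf{P}}\,e^{m\mathbf{A}}e^{-m}$ reduces entrywise to a finite linear combination of functions $m^j e^{-(2\theta-1)m}$ with $0 \le j \le 4$: the projector $\hat{\mathbf{P}}$ annihilates the principal eigendirection, on which $e^{m\mathbf{A}}$ acts by the growing factor $e^m$, so all surviving contributions come from the size-$3$ Jordan block at $-(\theta-1)$, whose exponential carries the polynomial factors $1,\, m,\, \nicefrac{1}{2}m^2$; pairing two such factors with $e^{-m}$ yields the exponent $-(2\theta-1)$ and degrees in $m$ up to $4$. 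Integrating term by term via $\int_0^\infty m^j e^{-cm}\,dm = j!\,c^{-(j+1)}$ then expresses each entry of $\bm{\Sigma}^{(\mathbf{A})}$ as a rational function of $\theta$ whose denominator is a power of $\theta(2\theta-1)$; extracting the upper-left $3\times 3$ block and simplifying reproduces the six distinct entries in~(\ref{cov3}). As an independent check, the upper-left $2\times 2$ sub-block must coincide with the covariance computed by the recurrence method of \cite{Sparks2}.

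The routine part of the argument is the appeal to Theorem~\ref{Thm:AffMCLT} and the marginalization from the first $k+1$ coordinates of $\mathbf{Y}_n^{(\theta)}$ down to the first $k=3$ coordinates. The main obstacle is the explicit symbolic evaluation of the matrix integral. Because $\mathbf{A}$ is not diagonalizable and carries a Jordan block of size~$3$, the matrix exponential contains terms quadratic in~$m$, so the integrand climbs to degree~$4$ in~$m$ and the resulting denominators reach $(2\theta-1)^5$ — precisely the shape seen in $\bm{\Sigma}_\infty^{(\theta)}[3,3]$. Keeping track of the conjugations by $\mathbf{T}$ and $\mathbf{T}^{-1}$, the action of $\hat{\mathbf{P}}$ from both sides, and which powers of $m$ multiply which exponentials is where essentially all the labor lies, and in practice this step is best delegated to a computer-algebra system; the purpose of the structural remarks above is to certify that such a finite, absolutely convergent evaluation is legitimate and to pin down the denominators that appear.
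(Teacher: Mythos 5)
Your proposal is correct and takes essentially the same route as the paper: it specializes the multivariate convergence theorem for $\mathbf{X}_n^{(\theta)}$ (itself obtained from Theorem~\ref{Thm:AffMCLT} applied to the four-color affine urn $\mathbf{Y}_n^{(\theta)}$ of Proposition~\ref{Prop: YAffine}, followed by marginalization) to $k=3$, and identifies $\bm{\Sigma}_\infty^{(\theta)}$ with~(\ref{cov3}) exactly as in the paper's Example, via the Jordan decomposition of the non-diagonalizable core matrix and termwise evaluation of the integral~(\ref{EQ:SigAInt3}). Your structural remarks on the degree of the polynomial factors and the $(2\theta-1)$-powers in the denominators are consistent with the paper's computation and add a useful sanity check, but they do not change the argument.
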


\begin{remark}
In the special case of $\theta=2$, the hyperrecursive tree is the standard uniform recursive tree. In this case, $X_{n,1}^{(2)}$ is the count of the leaves in the tree and $X_{n,k}^{(2)}$ is the number of nodes of outdegree $(k-1)$. The above corollary recovers the result in~\cite{MahmoudSmythe012} and generalizes it. Furthermore, this process extends beyond the first three containment levels by generalizing results outlined in~\cite{MahmoudSmythe012}.
\end{remark}

\section{Simulations on First Three Containment Levels}
We finish this section by examining the rate of convergence on $\big(X_{n,1}^{(\theta)}, X_{n,2}^{(\theta)},X_{n,3}^{(\theta)}\big)$ and provide simulations on the first two levels of containment. We analyze the cases
$\theta=2, 3, 4, 5$. To allow sufficient mixing, we perform 2000 urn draws for each simulation, record the number of balls at containment levels 1 and 2, and replicate the process 1000 times. Using these results, we provide estimates for the means and the covariance matrix and assess the trivariate normality of $\big(X_{n,1}^{(\theta)}, X_{n,2}^{(\theta)},X_{n,3}^{(\theta)}\big)$ by using the Henze-Zirkler's (HZ) Test for Multivariate Normality.

Table \ref{table:HRT} provides estimates that are quite close to our exact means and asymptotic variances and covariances, with HZ test statistics which do not reject the assumption of trivariate normality. In Figure \ref{fig:figTX12} below, we obtain simulated distributions for $X_{2000,1}^{(\theta)}, X_{2000,2}^{(\theta)}$ and $X_{2000,3}^{(\theta)}$ that appear to be approximately normal in nature as demonstrated by the similarity between the distribution curve of the kernel (black) to that of the theoretical Gaussian curve (red), supporting the table and theory provided. It is worthy to note that the simulation result suggest that there is a relatively quick rate of convergence with respect to the limiting distribution for the vector of counts for each of these containment levels.

\begin{table}[h]
\caption{Simulations of $\big(X_{2000,1}^{(\theta)}, X_{2000,2}^{(\theta)},  X_{2000,3}^{(\theta)}\big)$ with $1000$ Replications.}\label{table:HRT}%
\begin{tabular}{@{}l|l|l@{}}
\toprule
$\theta$ & Theoretical Values & Simulation Results \\
\midrule
$\theta$=2 & $\bm{\mu}_{\infty}=\begin{pmatrix} 0.5,& 0.25,& 0.125\end{pmatrix}$ &$\hat{\bm{\mu}}_{\infty}\approx\begin{pmatrix}0.500,& 0.250,& 0.125\end{pmatrix}$\\
&&\\
&$\bm{\Sigma}_{\infty}\approx\begin{pmatrix}
 0.083 & -0.097 & -0.012 \\
 -0.097 & 0.164 & -0.043 \\
 -0.012 & -0.043 & 0.091 \\
\end{pmatrix}$&$\hat{\bm{\Sigma}}^{(\mathcal{R})}_{\infty}\approx\begin{pmatrix}
0.079 &-0.094& -0.012\\
-0.094&  0.160 &-0.040 \\
 -0.012 &-0.040&  0.089\\
\end{pmatrix}$ \\ 
&&\\
&&HZ$=0.671; p=0.958$\\
\hline
$\theta$=3 & $\bm{\mu}_{\infty}\approx\begin{pmatrix}0.333,& 0.222,& 0.148\end{pmatrix}$ &$\hat{\bm{\mu}}_{\infty}\approx\begin{pmatrix}0.334,& 0.223,& 0.148\end{pmatrix}$\\
&&\\
&$\bm{\Sigma}_{\infty}\approx\begin{pmatrix}
 0.089 & -0.083 & -0.023 \\
 -0.083 & 0.151 & -0.041 \\
 -0.023 & -0.041 & 0.113 \\
\end{pmatrix}$&$\hat{\bm{\Sigma}}^{(\mathcal{R})}_{\infty}\approx\begin{pmatrix}
  0.092& -0.082& -0.026 \\
-0.082  &0.150 &-0.046 \\
-0.026 &-0.046 & 0.125\\
\end{pmatrix}$ \\ 
&&\\
&&HZ$=1.018; p=0.118$\\
\hline
$\theta$=4 & $\bm{\mu}_{\infty}\approx\begin{pmatrix}0.25, &0.188, &0.141\end{pmatrix}$ &$\hat{\bm{\mu}}_{\infty}\approx\begin{pmatrix}0.251,& 0.187,& 0.141\end{pmatrix}$\\
&&\\
&$\bm{\Sigma}_{\infty}\approx\begin{pmatrix}
 0.08 & -0.066 & -0.023 \\
 -0.066 & 0.129 & -0.036 \\
 -0.023 & -0.036 & 0.109 \\
\end{pmatrix}$&$\hat{\bm{\Sigma}}^{(\mathcal{R})}_{\infty}\approx\begin{pmatrix}
 0.087& -0.071& -0.027  \\
 -0.071 & 0.138 &-0.038  \\
-0.027& -0.038 & 0.115 
\end{pmatrix}$ \\ 
&&\\
&&HZ$=1.028; p=0.104$\\
\hline
$\theta$=5 & $\bm{\mu}_{\infty}\approx\begin{pmatrix} 0.2, &0.16,& 0.128\end{pmatrix}$ &$\hat{\bm{\mu}}_{\infty}\approx\begin{pmatrix}0.201,& 0.161,& 0.128\end{pmatrix}$\\
&&\\
&$\bm{\Sigma}_{\infty}\approx\begin{pmatrix}
 0.071 & -0.054 & -0.021 \\
 -0.054 & 0.112 & -0.031 \\
 -0.021 & -0.031 & 0.1 \\
\end{pmatrix}$&$\hat{\bm{\Sigma}}^{(\mathcal{R})}_{\infty}\approx\begin{pmatrix}
0.074& -0.052& -0.022 \\
 -0.052  &0.107 &-0.032 \\
-0.022& -0.032  &0.096 \\
\end{pmatrix}$ \\ 
&&\\
&&HZ$=0.706; p=0.915$\\
\botrule
\end{tabular}
\end{table}

\noindent

\begin{figure}[H]
\centering
{\fbox{$\theta=2$}}
 \includegraphics[width=130mm, scale=0.2]{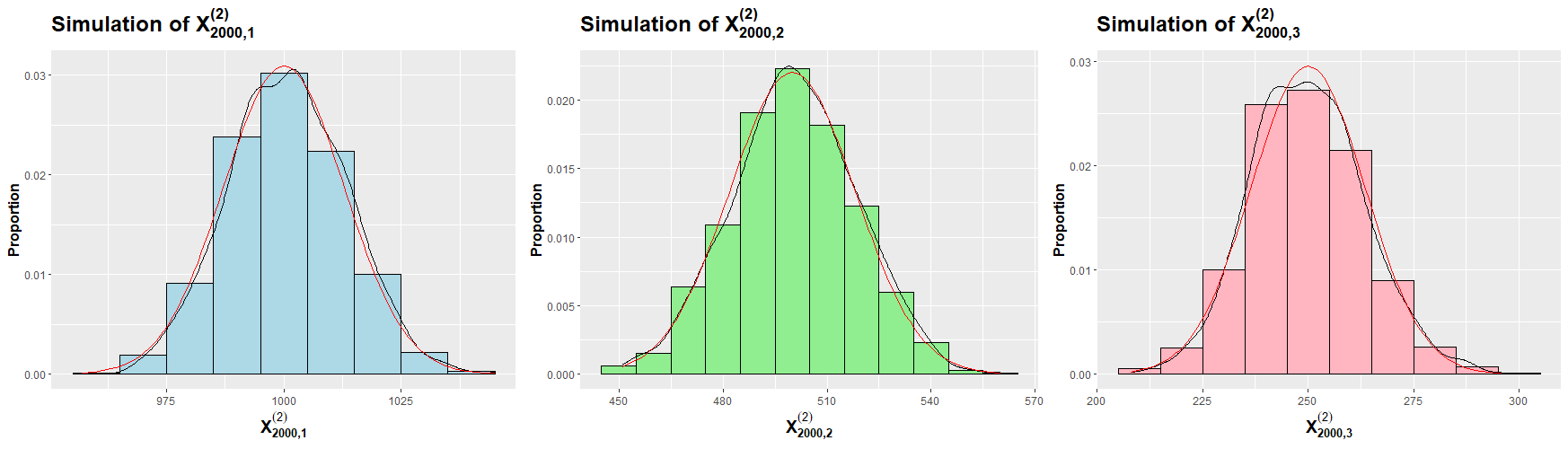}
{\fbox{$\theta=3$}}
 \includegraphics[width=130mm, scale=0.2]{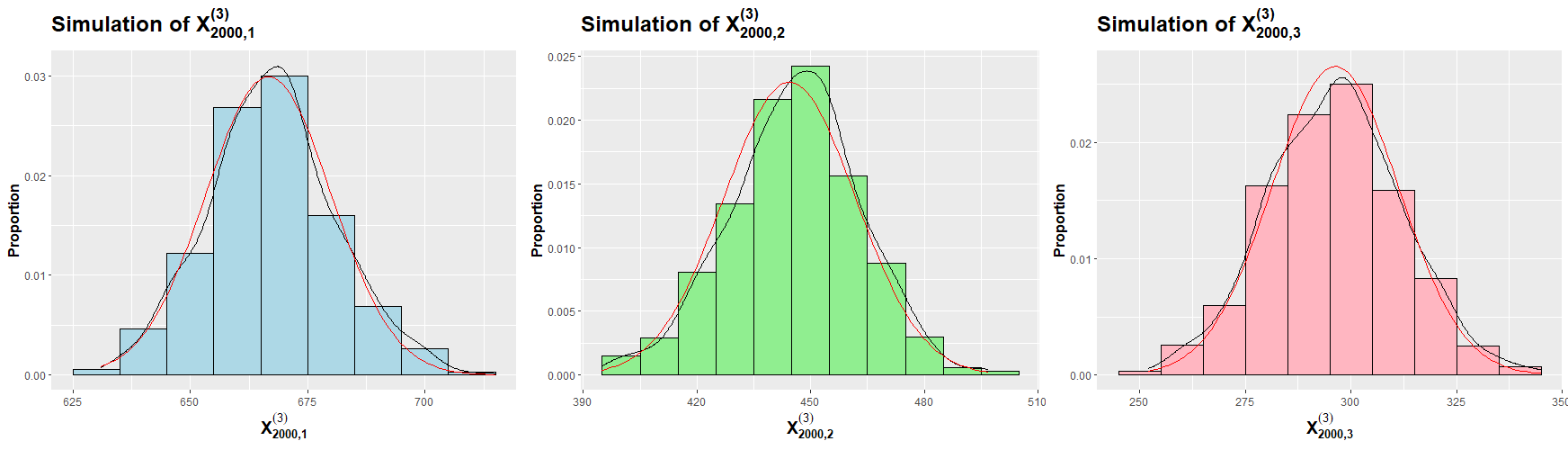}
{\fbox{$\theta=4$}}
 \includegraphics[width=130mm, scale=0.2]{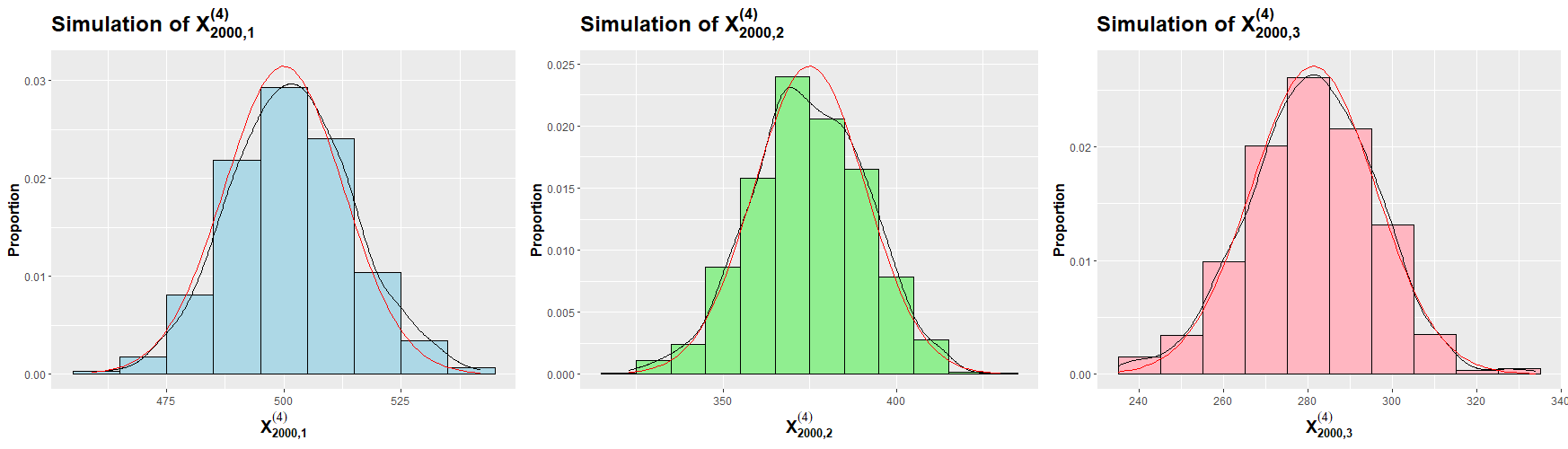}
{\fbox{$\theta=5$}}
 \includegraphics[width=130mm, scale=0.2]{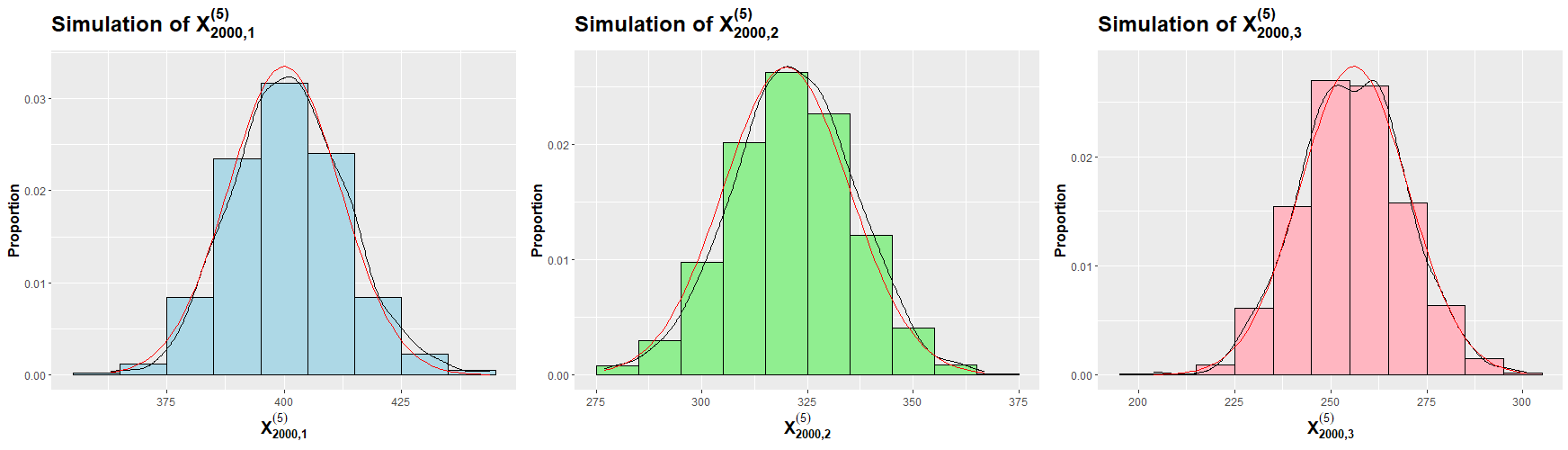}
\caption{Simulations of $\big(X_{2000,1}^{(\theta)}, X_{2000,2}^{(\theta)},X_{2000,3}^{(\theta)} \big)$ with $1000$ Replications}
\label{fig:figTX12}
\end{figure}

\section{Declarations}
\subsection{Funding Declaration}
There is no funding declarations to report.
\subsection{Availability of Data and Materials Declaration}
The simulated data can be acquired using the R code found in Appendix A, with the multivariate hypergeometric distribution attained via the \emph{extraDistr} package.
\subsection{Competing Interest Declaration}
There are no competing interests to report.

\bibliography{sn-bibliography}

\begin{thebibliography}{99}
\bibliographystyle{APT}
\bibitem {Athreya}       
        { Athreya, K, and Karlin, S} (1968) 
        Embedding of urn schemes into continuous time markov branching process and related limit theorems. 
        { The Annals of Mathematical Statistics} {39:}1801--1817.
\bibitem {Bagchi}       
        { Bagchi, A, and Pal, A} (1985)
        Asymptotic normality in the generalized P\'{o}lya-Eggenberger urn model with applications to computer data structures. 
        { SIAM Journal on Algebraic and Discrete Methods} {6:}394--405.
\bibitem {Chen1}       
        {Chen, M-R, and Kuba, M} (2013)
        On generalized P\'{o}yla urn models. 
        {Journal of Applied Probability} {50:}1169--1186.
\bibitem {Chen2}       
        {Chen, M-R, and Wei, C-Z} (2005)
        A new urn model. 
        {Journal of Applied Probability} {42:}964--976.
\bibitem{Drmota}
        {Drmota, M} (2009)
        {Random trees: An interplay between combinatorics and probability}.    
         Springer, New York.
\bibitem{Euler}    
        {Euler, L} (1740)
   	De progressionibus harmonicis observationes. 
        {Commentarii Academiae Scientiarum Imperialis Petropolitanae} {7}:150--161.
\bibitem {QQ}       
        {Feng, Q, Mahmoud, H, and Panholzer, A} (2008)
        Phase changes in subtree varieties in random recursive trees 
        and Binary Search Trees. 
        {SIAM Journal on Discrete Mathematics} {22:}160--184.
\bibitem {Flajolet}       
        {Flajolet, P, Dumas, P, and Puyhaubert, V} (2005). 
        Some exactly solvable models of urn process theory.
        {\it Discrete Mathematics and Theoretical Computer Science} {AD:}59--118.
\bibitem{Karonski}
   {Frieze, A,\ and Karo\' nski, M} (2015)
   {Introduction to random graphs.}
   Cambridge University Press, Cambridge.
\bibitem{Hofri}
        {Hofri, M,\ and Mahmoud, H} (2018).
         {Algorithmics of nonuniformity: Tools and paradigms}.
         CRC Press, Boca Raton, Florida.
\bibitem {Horn}
{Horn, R, and Johnson, C} (2013)
{Matrix analysis, 2nd edn.}
Cambridge University Press, Cambridge.
\bibitem {Janson}       
        {Janson, S} (2004)
        Functional limit theorems for multitype branching processes and generalized P\'{o}lya urns. 
        {Stochastic Processes and Their Applications} {110}:177--245.
\bibitem {Janson2020}       
        {Janson, S} (2020) 
        Mean and variance of balanced P{\'o}lya urns.
        {Advances in Applied Probability} {52:}1224--1248.
\bibitem{Kendall}  
       {Kendall, M, Stuart , A,\ and Ord, K} (1987). 
         {Advanced theory of statistics, vol. I: Distribution theory}. 
         Oxford University Press, Oxford.
\bibitem{KubaX}   
 {Kuba, M} (2016). 
         Classification of urn models with multiple drawings.
        arXiv:1612.04354 [math.PR].
\bibitem {KubaM}       
        {Kuba, M, and Mahmoud, H} (2017) 
        Two-color balanced affine urn models with multiple drawings. 
        {Advances in Applied Mathematics} {90:}1--26.
\bibitem{Mah2009}
       {Mahmoud, H} (2009) 
       {P\'{o}lya urn models}. CRC Press, Boca Raton, Florida.
\bibitem{Mah2013}
{Mahmoud, H} (2013) 
         Drawing multisets of balls from tenable balanced linear urns. 
          {Probability in the Engineering and Informational Sciences} {27:}147--162.
\bibitem{MahmoudSmythe012}
            {Mahmoud, H, and Smythe, R}(1992)
            Asymptotic joint normality of outdegrees of nodes in 
            random recursive trees. 
            {Random Structures and Algorithms} {3:}255--266.
\bibitem {Smythe} 
{Smythe, R, and Mahmoud, H} (1996)
	A survey of recursive trees.
	{Theory of Probability and Mathematical Statistics}
{51:}1--29.
\bibitem{Smythe2}
           {Smythe, RT} (1996)
           Central limit theorems for urn models.
           {Stochastic Processes and Their Applications} {65:}115--137.
\bibitem{Sparks2}
 {Sparks, J, Balaji, S, and Mahmoud, H} (2022)
         The containment profile of hyper-recursive trees.
        {Journal of Applied Probability} {59:}278--296.
\bibitem{Sparks23}   
 {Sparks, J} (2023) 
         {Investigating multicolor affine urn models with multiple drawings}. The George Washington University Dissertation, Washington, D.C.
\bibitem{Tricomi}
       {Tricomi, F,
       and Erd\'elyi, A} (1951) 
       The asymptotic expansion of a ratio of gamma functions.
       {Pacific Journal of Mathematics} {1:}133--142.
\end{thebibliography}

\begin{appendices}
\section{R Code for Hyperrecursive Trees} 
Algorithm 1 below is the R programming code that will produce the simulation results provided in Section 5.
\begin{algorithm}
\caption{R Code for Generating Simulation for the First Three Global Containment Levels of a Hyperrecursive Tree}
\label{HRTR}
\small
\begin{verbatim}

MHyperurnD<-function(th,n){
  A<-t(matrix(c(-(th-2),(th-1),0,0,
                1,-(th-1),(th-1),0,
                1,0,-(th-1),(th-1),
                1,0,0,0),nrow=4))
  m=th-1
  x <- c(th,0,0,0)
  k<-length(x)
  r<-m+k-1
  t<-k-1
  E <- matrix(c(replicate((k+1)*choose(r,t),0)), ncol=k+1)
  E[1:choose(r,t),2]=r
  for(i in 3:(k+1)){E[1,i]=k+1-i}
  for(h in 2:choose(r,t)){ s<-0
    while(s<k){
      q<-k+1-s
      if((E[h-1,q-1]-E[h-1, q])==1){
        E[h,q]=s
        s=s+1 }
      else{
        for(j in 2:q){E[h,j]=E[h-1,j]}
        E[h,k+1-s]=E[h-1,k+1-s]+1
        s=k}  }}
  C <- matrix(c(replicate(k*choose(r,t),0)), ncol=k)
  for(p in 1:choose(r,t)){
    C[p, 1]=r-E[p, 2]-1
    C[p, k] = E[p,k+1]
    for(z in 2:k-1){
      C[p,z] = E[p,z+1]-E[p,z+2]-1 } }
  M <- matrix(c(replicate(k*choose(r,t),0)), ncol=k)
  for(i in 1:choose(r,t)){
    M[i,]= C[i,]%*%A/m  }
  tt=0
  while(tt < n){  
    H <- rmvhyper(1, x, m)
    for(i in 1:choose(r,t)){
      if(isTRUE(all.equal(C[i,],H[1:k]))){
        x=x+M[i,]}
      else{x=x}}
    tt=tt+1} 
  return(x)}
set.seed(8801)
theta2=replicate(1000,MHyperurnD(2,2000))
set.seed(9501)
theta3=replicate(1000,MHyperurnD(3,2000))
set.seed(4102)
theta4=replicate(1000,MHyperurnD(4,2000))
set.seed(4706)
theta5=replicate(1000,MHyperurnD(5,2000))
\end{verbatim}
\end{algorithm}
\end{appendices}
\end{document}